\newcommand*{\ceilfrac}[2]{\mathopen{}\left\lceil\frac{#1}{#2}\right\rceil\mathclose{}}
\newcommand*{\floorfrac}[2]{\mathopen{}\left\lfloor\frac{#1}{#2}\right\rfloor\mathclose{}}
\newcommand*{\abs}[1]{\lvert #1\rvert}
\newcommand{\diam}{diam}
\newtheorem{defi}{Definition}
\newtheorem{cor}[defi]{Corollary}
\newtheorem{thm}[defi]{Theorem}
\newtheorem{lem}[defi]{Lemma}
\newtheorem{prop}[defi]{Proposition}
\newtheorem{exam}[defi]{Example}
\newtheorem{remark}[defi]{Remark}
\newtheorem{claim}[defi]{Claim}
\newcommand*{\myproofname}{Proof}
\newenvironment{claimproof}[1][\myproofname]{\begin{proof}[#1]}{\end{proof}}
\title{Uniform \v Solt\'es' hypergraphs and \v Solt\'es' weighted graphs}
\author{Stijn Cambie
 \thanks{Department of Computer Science, KU Leuven Campus Kulak-Kortrijk, 8500 Kortrijk, Belgium. Supported by a postdoctoral fellowship by the Research Foundation Flanders (FWO) with grant number 1225224N. Email: \protect\href{mailto:stijn.cambie@hotmail.com}{\protect\nolinkurl{stijn.cambie@hotmail.com}}} \and Ajay Tiwari \thanks{Department of Mathematics, KU Leuven, Celestijnenlaan, 200B, Leuven.  Email: \protect\href{mailto:ajaytiwari111@hotmail.com}{\protect\nolinkurl{ajaytiwari111@hotmail.com}} }}
\begin{document}
\parindent=0cm
\maketitle

\begin{abstract}
    A \v Solt\'es' hypergraph is a hypergraph for which the removal of any of its vertices does not change its total distance.
    We prove that every uniform \v Solt\'es' hypergraph has order at least $10$, there exist uniform \v Solt\'es' hypergraphs for almost every order or uniformity, and there exist a non-regular uniform \v Solt\'es' hypergraph.
    By also providing infinitely many weighted \v Solt\'es' graphs, we conclude that  \v Solt\'es' problem can be answered positively for the most natural generalisations of graphs.
\end{abstract}

\section{Introduction}
The \v Solt\'es' problem~\cite{Soltes91}, which asks about graphs for which vertex removal does not change the total distance, has been studied extensively in the context of graph theory, with recent work extending this problem to hypergraphs~\cite{Cambie24Hyper}. In this paper, we build upon this prior research, focusing specifically on uniform hypergraphs. We assume that the reader is familiar with basic terminology; for those unfamiliar, we refer to \cite[Subsec.~1.1]{Cambie24Hyper}.
    
    We address $4$ subquestions posed in~\cite[Ques.~13]{Cambie24Hyper}.
    The main results are summarised as follows:
    \begin{itemize}
        \item the smallest uniform \v Solt\'es' hypergraphs have order $10$,
        \item for every $n \ge 10$, there exists a uniform \v Solt\'es' hypergraph of order $n$,
        \item for most integers $k \ge 4$, there exists a $k$-uniform \v Solt\'es' hypergraph,
        \item there exists a non-regular uniform \v Solt\'es' hypergraph.
    \end{itemize}


    

    The problem of \v Solt\'es has been considered for various extensions of graphs, and for each of these, one has concluded that there is an abundancy for the analogue of \v Solt\'es' problem.

    We first recall a few definitions from~\cite{Spiro22}.
    A \emph{signed graph} is a pair \((G, f)\) where \(G\) is a graph and \(f\) is a function from \(E(G)\) to \(\{ \pm 1 \}\), which is called a \emph{signing}.
If \( P \) is a path in \( G \) and \(f \) is a signing of \( G \), 
\(
f(P) := \sum_{e \in P} f(e).
\)
For \( u, v \in V(G) \), the \emph{signed distance} is defined as
$
d_{G,f}(u,v) = \min_P \abs{f(P)}$
where the minimum ranges over all \( uv \)-paths \( P \).
The Wiener index \( W_f(G) \) of the signed graph \( (G, f) \) equals
\[
W_f(G) =  \sum_{\{u,v\} \subset V(G)} d_f(u,v).
\]

Spiro~\cite[Cor.~1.2]{Spiro22} proved that the \v Solt\'es' problem has an abundance of solutions for signed graphs, including generalisations involving the removal of subsets of vertices. Specifically, the following result holds: 
\begin{thm}[\cite{Spiro22}]
    If \( n \) is sufficiently large and \( G \) is an \( n \)-vertex graph with minimum degree at  
least \( \frac{2n}{3} \), then there exists a signing \( f \) of \( G \) such that the signed graph $(G, f)$ has \v Solt\'es' property in a strong form, \[
W_f(G) = W_f(G - v) = 0 \quad \text{for all } v \in V(G).
\]
\end{thm}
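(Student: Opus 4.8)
The plan is to prove the stronger combinatorial statement that a uniformly random signing $f$ of $G$ has, with probability tending to $1$, the property that \emph{every} pair of distinct vertices $\{u,w\}$ has at least two \emph{good} common neighbours, where $x$ is called a good common neighbour of $\{u,w\}$ if $x \in N(u)\cap N(w)$ and $f(ux)\neq f(xw)$. This suffices: since signed distances are non-negative, $W_f(H)=0$ for a graph $H$ is equivalent to $d_{H,f}(a,b)=0$ for all $a,b$, and if $x$ is a good common neighbour of $\{u,w\}$ then the path $P = u\,x\,w$ has $\abs{f(P)} = \abs{f(ux)+f(xw)} = 0$, so $d_{G,f}(u,w)=0$; moreover, for any $v\notin\{u,w\}$, at least one of the two good common neighbours of $\{u,w\}$ differs from $v$, and then this length-$2$ path still lives in $G-v$, giving $d_{G-v,f}(u,w)=0$. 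Hence the existence of such an $f$ immediately yields $W_f(G)=W_f(G-v)=0$ for all $v\in V(G)$.

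First I would record the consequence of the degree hypothesis: if $\deg(u),\deg(w)\ge \tfrac{2n}{3}$, then $\abs{N(u)\cap N(w)}\ge \tfrac{2n}{3}+\tfrac{2n}{3}-n=\tfrac{n}{3}$, and every such common neighbour $x$ is automatically distinct from $u$ and $w$, so $u\,x\,w$ is a genuine path. Thus every pair has at least $n/3$ \emph{candidate} midpoints, independent of the signing.

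Next comes the probabilistic step. Fix a pair $\{u,w\}$ and set $m:=\abs{N(u)\cap N(w)}\ge n/3$. For a uniformly random signing $f$, let $X_{u,w}$ count its good common neighbours. The $2m$ edges $\{ux: x\in N(u)\cap N(w)\}\cup\{xw: x\in N(u)\cap N(w)\}$ are pairwise distinct (using $u\neq w$ and that each such $x$ is adjacent to both), so the indicator events $\{f(ux)\neq f(xw)\}$ are mutually independent, each of probability $\tfrac12$; hence $X_{u,w}\sim\mathrm{Bin}(m,\tfrac12)$. A Chernoff/Hoeffding bound then gives $\Pr[X_{u,w}\le \tfrac{n}{12}]\le e^{-cn}$ for an absolute constant $c>0$, and a union bound over the at most $\binom{n}{2}$ pairs shows that for $n$ sufficiently large there is a signing $f$ with $X_{u,w}> n/12\ge 2$ for every pair $\{u,w\}$. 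By the reduction in the first paragraph, this $f$ witnesses the theorem.

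I expect no genuine obstacle here; the two points that need care are the distinctness of the $2m$ edges (so the sign events are truly independent) and choosing the threshold so the Chernoff exponent beats the $\binom{n}{2}$ union-bound factor, which it does comfortably. Finally, I would remark that the argument is robust: since every pair in fact has more than $n/12$ good common neighbours, deleting \emph{any} set $S$ of at most $n/12-1$ vertices still leaves a good common neighbour for each surviving pair, so $W_f(G-S)=0$ for all such $S$, recovering the subset-removal strengthening alluded to above.
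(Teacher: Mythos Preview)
The paper does not prove this theorem; it is quoted from \cite{Spiro22} as background, with no argument given here, so there is nothing in the present paper to compare your proposal against.

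That said, your proof is correct. The minimum-degree hypothesis gives $|N(u)\cap N(w)|\ge n/3$ for every pair; the $2m$ edges $\{ux,xw:x\in N(u)\cap N(w)\}$ are pairwise distinct (your check that $ux_1=x_2w$ would force $u=x_2\in N(u)$ is the only nontrivial case), so the indicators $\mathbf 1[f(ux)\ne f(xw)]$ are independent $\mathrm{Bernoulli}(1/2)$ and $X_{u,w}\sim\mathrm{Bin}(m,1/2)$; a Chernoff bound with a union bound over $\binom n2$ pairs then yields a signing in which every pair has more than $n/12$ good common neighbours. The reduction ``at least two good common neighbours for every pair $\Rightarrow W_f(G)=W_f(G-v)=0$ for all $v$'' is exactly right, and your closing remark that the same signing in fact handles deletion of any set of up to $\lfloor n/12\rfloor-1$ vertices recovers the subset-removal strengthening alluded to in the text. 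This probabilistic route (random signing, common-neighbour count, Chernoff, union bound) is also the approach taken in \cite{Spiro22}.
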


Cambie~\cite[Thm.~1]{Cambie25digraph} proved abundancy for the \v Solt\'es' problem in the digraph (directed graph) setting, as stated below, and gave an example of a large \v Solt\'es' digraph with trivial automorphism (no two vertices belong to the same vertex orbit).
\begin{thm}[\cite{Cambie25digraph}]
    For every $z \in \mathbb Z,$ there are infinitely many digraphs $D$ for which $W(D)-W(D \setminus v)=z$ for every $v \in V.$
\end{thm}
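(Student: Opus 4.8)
The plan is to produce, for each $z\in\mathbb Z$, an explicit infinite family of strongly connected digraphs that remain strongly connected after the deletion of any single vertex (so that $W(D)$ and all $W(D\setminus v)$ are finite), and to make the family \emph{vertex-transitive} — for instance circulant digraphs $\mathrm{Cay}(\mathbb Z_n,S)$, or a symmetric blow-up of a directed cycle. Vertex-transitivity is the cheap way to secure the ``for every $v$'' clause: $D\setminus v$ is then independent of $v$ up to isomorphism, so $W(D)-W(D\setminus v)$ is automatically the same for all $v$, and the task reduces to: for each $z$, find infinitely many $2$-strongly-connected vertex-transitive digraphs $D$ with $W(D)-W(D\setminus v)=z$. (The trivial-automorphism example alluded to above would then come from perturbing such a symmetric skeleton and re-checking the equalities directly.)

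Before hunting for the family I would first pin down what it must look like. Writing $T(v)=\sum_{a\neq v}\bigl(d(v,a)+d(a,v)\bigr)$ for the transmission of $v$ and $\Delta(v)=\sum_{a,b\neq v}\bigl(d_{D\setminus v}(a,b)-d_D(a,b)\bigr)\geq 0$ for the total ``detour cost'' of deleting $v$, one has the identity $W(D)-W(D\setminus v)=T(v)-\Delta(v)$, together with $T(v)\geq 2(n-1)$ where $n=|V(D)|$. Hence, with $z$ fixed and $n\to\infty$, one is forced to have $\Delta(v)\approx 2n$: deleting \emph{every} vertex must trigger a total rerouting of order $n$. This kills all the ``dense / small-diameter'' candidates — complete digraphs minus sparse arc sets, joins of small-diameter pieces, directed cycles of cliques, and so on — for which deleting a vertex never lengthens any distance, so $\Delta(v)=0$ and the difference is pinned at $\geq 2(n-1)$; and a plain directed cycle is not even $2$-strong. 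The family must therefore be a \emph{near-cycle}: a $2$-strongly-connected digraph that still behaves like a long directed cycle for most ordered pairs, so that excising any vertex forces an almost-complete loop detour whose cost cancels the transmission it destroys, up to the additive constant $z$. Concretely I would start from a directed cycle $C_n$ and graft onto it the cheapest structure — a small ``bypass'' gadget at, or between, every vertex — that makes vertex deletion survivable, with a parameter governing the length of the forced detour that can be tuned against $z$ while still letting $n\to\infty$, thereby giving an infinite subfamily for each $z$.

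Granting such a family, two Wiener-index computations finish the argument. For the symmetric digraph $D$ itself this is routine — for a circulant, $W(D)=n\sum_{j\neq 0}d(0,j)$, and I would choose the grafted arcs so that the distances $d(0,j)$ have a closed form (e.g.\ $d(0,j)=\lceil j/k\rceil$ for $S=\{1,\dots,k\}$, or a two-term connection set). The computation of $W(D\setminus v)$ is the delicate one: deleting $v$ breaks the circulant symmetry and removes the only short routes through $v$, so one has to split the ordered pairs $(a,b)$ into those whose geodesic is untouched and those forced onto a long ``wrap-around'' detour through the broken neighbourhood, compute the extra length of the latter, and collapse the resulting double sums. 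I expect this bookkeeping — and in particular verifying that the leading ($n^3$- and $n^2$-)terms of $W(D)$ and $W(D\setminus v)$ cancel, leaving only a constant — to be the bulk of the work.

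The main obstacle is locating the family: reconciling (i) $W(D)-W(D\setminus v)$ bounded, indeed constant, as $n\to\infty$, with (ii) $2$-strong connectivity and (iii) the rigid lower bound $T(v)\geq 2(n-1)$, which together demand a finely balanced near-cycle in which deleting any vertex triggers an $\Omega(n)$ detour cancelling the lost transmission up to exactly $z$. Once the right bypass gadget and its free parameter are found, it should be routine to check that $z$ runs over all of $\mathbb Z$ — including the negative values, where on balance deletion must \emph{increase} $W$ — and that infinitely many members of the family realise each $z$.
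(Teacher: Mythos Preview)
The paper does not prove this theorem at all: it is quoted from \cite{Cambie25digraph} as background, with no argument given here. So there is no ``paper's own proof'' to compare your attempt against.

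As for your proposal itself, it is a strategy outline rather than a proof. You correctly isolate the governing identity $W(D)-W(D\setminus v)=T(v)-\Delta(v)$, the lower bound $T(v)\ge 2(n-1)$, and the consequence that one needs $\Delta(v)\sim 2n$, hence a ``near-cycle'' with bypass structure; and you sensibly propose vertex-transitive (e.g.\ circulant) families to collapse the ``for every $v$'' clause. But the proof content is precisely the part you leave open: you write that ``the main obstacle is locating the family'' and that you ``would start from'' a directed cycle with a grafted bypass gadget whose parameter ``can be tuned against $z$'', without exhibiting any such gadget, without showing that the $n^3$- and $n^2$-terms actually cancel, and without verifying that the residual constant sweeps out all of $\mathbb Z$ for infinitely many $n$. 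Until an explicit construction is written down and the two Wiener indices are computed, this is a plausibility analysis, not a proof. In particular, your heuristic ``$\Delta(v)\approx 2n$'' has to become an exact equality $\Delta(v)=T(v)-z$ for \emph{every} $v$ and for infinitely many $n$ per $z$, and nothing in the sketch guarantees that the natural circulant candidates achieve this rather than missing by a slowly growing term.
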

While not proven in detail, the same should be true for oriented graphs.

We also note that for weighted graphs (with positive weights) \v Solt\'es' problem has infinitely many solutions. 
A concrete example is drawn in~\cref{fig:SoltesWG}.

\begin{exam}\label{exam:weightedSG}
    For every $k \ge 20$, let $n=2k$ and let $G$ be the prism graph $C_n \square K_2$, for which the edges of the two induced cycles $C_n$ have all weight $1$ and the remaining edges have weight $x=\frac{2k^2-6k+16}{k^2-9k+12}$.
    Then $W(G)=W(G \setminus v)$ for every $v \in V(G).$
\end{exam}

\begin{proof}
    Note that $2<x\le 3$ since $k \ge 20$ and the weighted graph is vertex-transitive.
    An elementary computation shows that $W(G)=4W(C_n)+n^2x=\frac{n^3}{2}+n^2x$ and thus $\sigma(v)=\frac{n^2}{2}+nx=2k^2+2kx$ for every $v \in v(G).$
    Furthermore, the detours are exactly between vertices $u,w \in V(G) \setminus v$ on the same induced $C_n$ as $v$, for which the shortest path in $G$ (and thus the $C_n$) uses $v$. The detour is $2$ or $4$ if $d(u,v)=k-1$ or $d(u,v)=k-2$ respectively, or $2x$ if $d(u,v) \le k-3.$ 
    Counting the pairs of vertices with these properties, implies that the sum of detours equals
    $$\sum_{\substack{u, w \in V\setminus \{v\}}} \big[d_{G \setminus v}(u, w) - d(u, w)\big]=(k-3)(2+4)+2+2x\binom{k-3}{2}=6k-16+x(k-3)(k-4).$$
    By the choice of $x,$ the sum of detours and the transmission of $v$ are equal and thus $W(G)=W(G \setminus v)$.
\end{proof}

\begin{remark}
    By multiplying the weights of~\cref{exam:weightedSG} with $\frac{k^2-9k+12}{\gcd(2k^2-6k+16,k^2-9k+12)}$, one can conclude there are infinitely many examples of weighted graphs with weights in $\mathbb Z^+$ which satisfy the \v Solt\'es' property and for which the greatest common divisor of weights is $1$ (to prevent $C_{11}$ with $11$ times the same arbitrary weight to be considered as an infinite family).
    The cycle $C_{10}$ with alternating edges having weight $0$ and $1$ can be considered as a small example.
\end{remark}

We conclude that in the most natural generalisations of graphs\footnote{This is a slightly subjective statement. Some people may consider e.g. matroids, in which case the problem is again quite different.} (digraphs, uniform hypergraphs, weighted graphs) \v Solt\'es' problem has infinitely many solutions.

\begin{figure}[h]
    \centering
    \begin{tikzpicture}[scale=0.668]

\foreach \x in {0,1,2,...,39}{
 \draw[thick, blue] (\x*9:5) -- (9*\x:6.5) ;
    \foreach \r in {5,6.5}{
\draw (\x*9:\r) -- (9*\x+9:\r) ;
\draw[fill] (\x*9:\r) circle (0.15);
}

}

\draw (4.5:7) node [black]{\huge $1$}; 
\draw (4.5:4.5) node [black]{\huge $1$}; 
\draw (4.5:5.75) node [black]{\huge \textcolor{blue}{$3$}}; 

\end{tikzpicture}
    \caption{A weigthed \v Solt\'es' graph of order $80$}
    \label{fig:SoltesWG}
\end{figure}
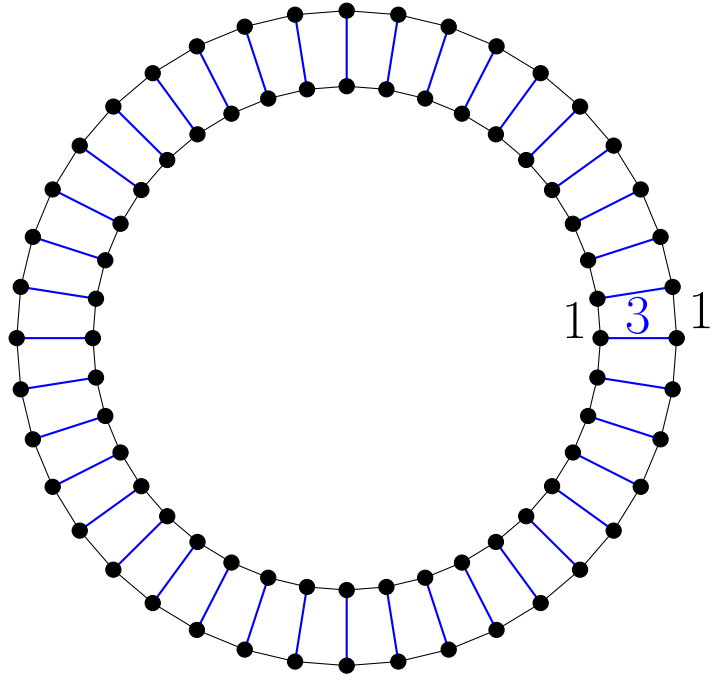

For resolving the graph case, we refer to the survey~[Sec.~6]\cite{KST23} and the paper~\cite{Cambie24}. The main intermediate question asks if there are infinitely many graphs with minimum degree at least $3$, for which the removal of any vertex implies an increase of the total distance. This is essential for possible abundance of \v Solt\'es' graphs, as was the case in the other settings.

\newpage
\subsection*{Overview and outline of the paper}

In~\cref{sec:minimum_USH}, we prove that there is no uniform \v Solt\'es' hypergraph with order bounded by $9.$

We do so by applying different strategies for different pairs $(n,k)$ of order and uniformity on a possible \v Solt\'es' hypergraph $H$.

\begin{itemize}
    \item The case $k=2$ was excluded by a brute force computer check.
    \item If $k \ge n-3 \ge 3$, the connected hypergraph $H \setminus v$ of order $n-1$ satisfies $W(H \setminus v) \le \binom{n-1}{2}+4 < \binom{n}{2} \le W(H)$.
    \item If $k=n-4\ge 3,$ we can exclude hypergraphs with small size by computer verification and for hypergraphs with large size, we check that $W(H \setminus v) < \binom{n}{2} \le W(H)$.
    \item If $k=3$ and $n \in \{8,9\}$, we take a vertex $v$ for which $H \setminus v$ has the largest possible value of $n_1$, the number of adjacent pairs of vertices, and conclude that $W(H \setminus v)<W(H)$ in most cases. For the latter, we notice that two adjacent vertices can be non-adjacent in at most one hypergraph of the form $H \setminus v.$
    The few remaining (sparse) cases in which $W(H \setminus v)<W(H)$ cannot be concluded, are checked by computer.
    \item The case $(n,k)=(9,4)$ is the main case. In this case, $H$ has diameter $2$ and thus the number of adjacent vertices plays an important role in the computation of the total distance $W(H).$
    Here we first prove multiple side results on the transmission and sum of detours (increase of distances due to removal another vertex) in special cases.
    It is observed that adjacent pairs can be non-adjacent in at most two hypergraphs of the form $H \setminus v,$ and equality leads to some structure.
    In the main proof, these observations are combined and it is proven that for every possible value of $W(H)$, $H$ cannot be a \v Solt\'es' hypergraph. 
\end{itemize}

The results on the abundancy of uniform \v Solt\'es' hypergraphs in~\cref{sec:abundancy} and~\cref{sec:irregular} depend on verifying certain constructions. Hereby the particular construction in~\cref{sec:abundancy} is part of a more general construction, called the knits-construction in~\cite{Tiwari25thesis}.

\section{The smallest uniform \v Solt\'es' hypergraphs have order $10$}\label{sec:minimum_USH}

We start excluding most combinations of order and uniformity.

\begin{prop}\label{prop:kunin9}
There is no $k$-uniform \v Solt\'es' hypergraph with order $n$, when
 $n \le 9$ and $k \ge n-4.$ 
\end{prop}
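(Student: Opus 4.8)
The plan is to dispense with the easy ranges first and then concentrate on the two genuinely hard cases, $(n,k)=(9,4)$ and $k=n-4$. For the bulk of the parameter space, the idea is a crude comparison of $W(H)$ and $W(H\setminus v)$. Since a $k$-uniform \v Solt\'es' hypergraph must be connected (disconnected hypergraphs have infinite total distance, so removing a vertex cannot preserve it), any candidate $H$ of order $n$ has $W(H)\ge\binom n2$, with equality iff $H$ has diameter $1$. When $k\ge n-3$, the hypergraph $H\setminus v$ still has order $n-1$ and small diameter; more precisely, I would argue $H\setminus v$ has diameter at most $2$ and only a bounded number of non-adjacent pairs, so $W(H\setminus v)\le\binom{n-1}{2}+4<\binom n2\le W(H)$, a contradiction. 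This handles $k\ge n-3\ge 3$ outright. For $k=2$ and $n\le 9$ there are only finitely many graphs to inspect, so a brute-force computer check finishes that sub-case; similarly the case $k=3$, $n\in\{8,9\}$ reduces to a finite (and computer-checkable) list once one observes, as sketched in the overview, that choosing $v$ to maximise the number of adjacent pairs in $H\setminus v$ forces $W(H\setminus v)<W(H)$ except in a few sparse configurations.

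For $k=n-4\ge 3$ (that is, $(n,k)\in\{(7,3),(8,4),(9,5)\}$), I would split on the size $m=\lvert E(H)\rvert$. If $m$ is large, then $H$ has small diameter and most pairs are already adjacent, while $H\setminus v$ — which has order $n-1$ and is still reasonably dense — cannot have total distance as large as $\binom n2$: one shows $W(H\setminus v)<\binom n2\le W(H)$ by bounding the number of pairs at distance $\ge 2$ in $H\setminus v$. If $m$ is small, there are only finitely many hypergraphs (up to isomorphism) and the claim is verified by computer. The threshold separating "large" from "small" should be chosen so that the density argument is valid above it and the search space below it is feasible.

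The main obstacle is the remaining case $(n,k)=(9,4)$, which the overview itself flags as "the main case". Here a diameter-$2$ hypergraph $H$ on $9$ vertices can genuinely have $W(H)$ close to $\binom 92=36$, so the crude bounds above are useless and one needs a careful double-counting argument. My approach would be: (i) establish that any \v Solt\'es' candidate here has diameter exactly $2$ (diameter $1$ forces $H=K_9^{(4)}$, which is easily checked not to work, and diameter $\ge 3$ is ruled out by the size needed for connectivity together with the transmission bound); (ii) for a fixed vertex $v$, decompose the \v Solt\'es' equation $W(H)-W(H\setminus v)=0$ into the transmission $\sigma_H(v)$ on one side and the sum of detours $\sum_{u,w\ne v}\big[d_{H\setminus v}(u,w)-d_H(u,w)\big]$ on the other; (iii) prove structural lemmas bounding $\sigma_H(v)$ from above and the detour sum from below in terms of local parameters (degrees of $v$, co-degrees, the number of edges "destroyed" by deleting $v$), using the key observation that a fixed adjacent pair $\{u,w\}$ can become non-adjacent in at most two of the hypergraphs $H\setminus v$, with equality imposing rigid structure on the edges through $u$ and $w$; (iv) parametrise by the value of $W(H)$ (equivalently, by the number $n_1$ of adjacent pairs, which ranges over a small interval since $H$ has diameter $2$) and show that for each admissible value the inequalities from (iii), summed over all $v$, are jointly infeasible. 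Steps (iii) and (iv) are where essentially all the work lies; the rest is bookkeeping, finitely many computer checks, and elementary estimates.
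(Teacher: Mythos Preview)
You have misread the hypothesis of the proposition. The statement only claims nonexistence for $k\ge n-4$, so for $n=8$ this means $k\ge 4$ and for $n=9$ it means $k\ge 5$. In particular the pairs $(n,k)=(8,3)$, $(9,3)$ and $(9,4)$ all satisfy $k<n-4$ and are \emph{not} covered by this proposition at all; the paper treats them in separate theorems (\cref{thm:no3unif_n8}, \cref{thm:no3unif_n9}, \cref{thm:no4unif_n9}). Consequently the entire last paragraph of your proposal, and the sentence about $k=3$, $n\in\{8,9\}$, are addressing cases outside the scope of what you are asked to prove. The ``main obstacle'' you flag, $(9,4)$, simply does not belong here.

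Once you strip out the extraneous cases, what remains of your plan---cite the known fact that no \v Solt\'es' graph has order $\le 9$ for $k=2$; for $k\ge n-3\ge 3$ use the crude bound $W(H\setminus v)\le\binom{n-1}{2}+4<\binom{n}{2}\le W(H)$; and for $k=n-4\in\{3,4,5\}$ split on the size $m$, handling small $m$ by computer and large $m$ by choosing $v$ of minimum degree so that $H\setminus v$ retains enough edges to force $W(H\setminus v)<\binom{n}{2}$---is exactly the paper's proof. So the approach is correct and essentially identical to the paper's for the statement actually being proved; you have just conflated this proposition with the full result of \cref{sec:minimum_USH}.
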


\begin{proof}
By~\cite[Thm. 3]{Cambie24Hyper}, which implies that there is no uniform \v Solt\'es' hypergraph of order at most $5$, we only need to check for $n \ge 6$.
Also $k \ge 3$ as the smallest \v Solt\'es' graph is $C_{11}$, as Dragan Stevanovi\'c checked it is the only \v Solt\'es' graph  of order bounded by $12.$

For $6 \le n \le 9,$ and $k \ge n-3$, by~\cite[Thm.~1]{CGSTT23+} (which determines the structure of the uniform hypergraphs with maximum Wiener index for given order and uniformity) we have that $W(H\setminus v)=+\infty$ or $W(H\setminus v)\le \binom{n-1}{2}+4 < \binom{n}{2}\le W(H) $. 
Next, we consider the cases where $k=n-4.$ All mentioned computer verification can be found at \cite[doc. \text{NoSmallSH}]{C25github}.
 
A computer check can verify that no $3$-uniform hypergraph on $n=7$ vertices with $m \le 10$ edges can be a \v Solt\'es' hypergraph.
When the size is $m \ge 11$, there is a vertex $v$ (with minimum degree) whose deletion leaves at least $\ceilfrac{4m}{7} \ge 7$ edges.
But then $H \setminus v$ is either disconnected, or (by computer check) $W(H \setminus v) \le 19<\binom{7}{2} \le W(H)$.
More precisely, we know that $W(H \setminus v)$ has to be finite and thus that $H \setminus v$ has order $6$ and is $3$-uniform, if two vertices are at distance $3,$ $H \setminus v$ can have at most $6$ edges. Hence $\diam(H \setminus v)$ needs to be $2$ and we can check that there are at most $2\cdot 2=4$ non-adjacent vertices.

 A computer check can verify that no $4$-uniform hypergraph on $n=8$ vertices with $m \le 6$ edges can be a \v Solt\'es' hypergraph.
When the size is $m \ge 7$, there is a vertex $v$ (with minimum degree) whose deletion leaves at least $\ceilfrac{4m}{8} \ge 4$ edges.
 But then $\diam(H \setminus v) \le 2$ and $W(H \setminus v) \le \binom{7}{2}+6<\binom{8}{2} \le W(H)$.

 The remaining case is $n=9$ and $u=5.$
 A computer check can verify that no $5$-uniform hypergraph on $n=9$ vertices with $m \le 6$ edges can be a \v Solt\'es' hypergraph.
 When the size is $m \ge 7$, there is a vertex $v$ (with minimum degree) whose deletion leaves at least $\ceilfrac{4m}{9} \ge 4$ edges.
 But then again $\diam(H \setminus v) \le 2$ and $W(H \setminus v) \le \binom{8}2+6=34<\binom{9}{2} \le W(H)$.
\end{proof}

As such, there are $3$ cases left, $(n,k) \in \{(8,3),(9,3),(9,4)\}.$ These cases are excluded in the following subsections.

\subsection{No $3$-uniform \v Solt\'es' hypergraph with order $8$ or $9$}

\begin{thm}\label{thm:no3unif_n8}
     There is no $3$-uniform \v Solt\'es' hypergraph with order $8$.
\end{thm}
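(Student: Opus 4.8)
The plan is to reason about the distance structure of a hypothetical $3$-uniform Šoltés' hypergraph $H$ of order $8$, following the strategy sketched in the overview. First I would establish that $H$ must be connected and that $W(H) \ge \binom{8}{2} = 28$, with equality only if $H$ has diameter $1$ (i.e.\ every pair of vertices is adjacent); since a $3$-uniform hypergraph of order $8$ with diameter $1$ would need a large number of edges, I expect to handle this extreme case either by a direct counting bound on the minimum number of edges forcing diameter $1$, or by noting that then some $H \setminus v$ still has diameter $1$ on $7$ vertices, forcing $W(H \setminus v) = 21 < 28$. So the interesting regime is $W(H) \ge 29$, meaning $H$ has at least one non-adjacent pair, equivalently diameter $2$.

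Next I would pick the vertex $v$ to delete so that $H \setminus v$ has the largest possible number $n_1$ of adjacent pairs among all choices of $v$ — this is the pigeonhole step flagged in the outline. The key combinatorial observation is that a pair $\{x,y\}$ that is adjacent in $H$ can become non-adjacent in $H \setminus v$ only if every edge of $H$ containing both $x$ and $y$ also contains $v$; since each such edge has size $3$, it contains exactly one other vertex besides $x,y$, so a fixed adjacent pair $\{x,y\}$ is "broken" by at most one vertex $v$. Summing over the $8$ choices of deleted vertex, the total number of (pair, breaking-vertex) incidences is at most $n_1(H)$, so for the best choice of $v$ we lose at most $n_1(H)/8$ adjacencies, hence $n_1(H\setminus v) \ge n_1(H) - n_1(H)/8$. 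Combined with $W(H \setminus v) = \binom{7}{2}\cdot 1 + (\text{extra for non-adjacent pairs in }H\setminus v) + (\text{detours})$ versus $W(H) = \binom 8 2 + (8 - n_1(H)\text{-type count})$, I would push through the inequality $W(H\setminus v) < W(H)$ in all but a bounded number of sparse configurations.

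Concretely: writing $\bar n_1(F) = \binom{|V(F)|}{2} - n_1(F)$ for the number of non-adjacent pairs in a connected diameter-$\le 2$ hypergraph $F$, we have $W(F) = \binom{|V(F)|}{2} + \bar n_1(F)$. So $W(H) = 28 + \bar n_1(H)$ and $W(H\setminus v) = 21 + \bar n_1(H\setminus v) + (\text{sum of detours at }v)$, where detours arise only when deleting $v$ pushes some distance from $2$ up to $3$ (or to $\infty$, which we have excluded). For a pair $\{x,y\}$ at distance $2$ in $H$ to go to distance $3$ in $H\setminus v$, every length-$2$ path between them must pass through $v$; I would bound the number of such pairs and the size of each detour crudely (each contributing at most, say, a bounded additive amount, with $\infty$ impossible). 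The Šoltés condition forces $28 + \bar n_1(H) = 21 + \bar n_1(H\setminus v) + \Delta_v$, i.e.\ $\bar n_1(H\setminus v) = 7 + \bar n_1(H) - \Delta_v$; but $\bar n_1(H\setminus v) \le \bar n_1(H) + (\text{pairs broken at }v)$ and, by the pigeonhole choice of $v$, the number of pairs broken at $v$ is small relative to $\bar n_1$, which squeezes $\Delta_v$ and hence the structure into finitely many sparse cases.

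The main obstacle I anticipate is exactly these residual sparse cases — hypergraphs with few edges where the crude detour and adjacency bounds are not tight enough to derive a contradiction directly. For those I would follow the paper's stated approach and dispatch them by an explicit (computer-assisted) finite check, as already invoked for the analogous situations in \cref{prop:kunin9}; the relevant verification is at \cite[doc.\ \text{NoSmallSH}]{C25github}. The non-routine part of the hand argument is getting the bookkeeping of "broken pairs" versus "detour pairs" tight enough that only genuinely small-size hypergraphs survive to the computer check.
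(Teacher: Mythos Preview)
Your pigeonhole observation --- that an adjacent pair $\{x,y\}$ in a $3$-uniform $H$ can be broken by at most one vertex removal --- is exactly the engine the paper uses, and your averaging to find a good $v_i$ is the same step. But you are missing the two external ingredients that make the paper's argument close cleanly and \emph{without} any computer check for $(n,k)=(8,3)$.

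First, the paper immediately invokes \cite[Thm.~1]{CGSTT23+} to get $W(H)=W(H\setminus v)\le W(P_7^3)=38$, hence the number $r$ of non-adjacent pairs in $H$ satisfies $r\le 10$. You never obtain an a priori upper bound on $W(H)$, which is why your bookkeeping does not terminate. Second, and more importantly, the paper does \emph{not} track detours at all. Instead it passes to the underlying graph (the $2$-section) $G$ of $H\setminus v_i$: one has $W(H\setminus v_i)=W(G)$, and then {\v S}olt{\'e}s' own extremal result \cite[Thm.~1]{Soltes91} bounds $W(G)$ purely in terms of the order $7$ and the size $m=n_1(H\setminus v_i)$. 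The pigeonhole gives $m\ge 17.5-\tfrac{5r}{8}$, and plugging this into the piecewise bound $W(G)\le 42-m$, $58-2m$, $70-3m$ yields $W(G)<28+r\le W(H)$ for every $r\le 10$ directly.

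Your route tries to replace this by the identity $W(H\setminus v)=21+\bar n_1(H\setminus v)+\Delta_v$ and then to bound the detour term $\Delta_v$ ``crudely''. That is the genuine gap: you give no mechanism for an \emph{upper} bound on $\Delta_v$, only the lower bound coming from the {\v S}olt{\'e}s condition, so nothing forces a contradiction except an unspecified finite check. (Relatedly, your formula $W(H)=28+\bar n_1(H)$ presupposes $\mathrm{diam}(H)=2$, which you have not established; the paper only uses $W(H)\ge 28+r$, which is diameter-free.) If you bring in the $2$-section and the classical {\v S}olt{\'e}s size bound, the detour analysis and the residual computer verification you anticipate both disappear.
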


\begin{proof}
    By~\cite[Thm.~1]{CGSTT23+}, there is a unique tight path $P_{7}^3$ which maximises the Wiener index of $3$-uniform hypergraphs of order $7$, and it has Wiener index $38$.
    Hence if $H$ is $3$-uniform \v Solt\'es' hypergraph with order $8$, $W(H)=W(H \setminus v) \le 38.$
    Let $\abs{E(H)}=28-r.$
    Then $W(H) \ge 28+r$, since $r$ pairs of distances are at least two.
    In particular, we have $r \le 10.$
    
    We observe that for every two adjacent vertices $u,v \in v(H)$, there is at most one vertex $w \in V \setminus\{u,v\}$ such that $d_{H \setminus w}(u,v)>1.$
    Every two vertices for which $d_H(u,v)>1$ are both present in $6$ out of the $8$ hypergraphs $H\setminus v_i$ for $1 \le i \le 8.$
    
    By the above two observations, among the $8$ choices of the vertices we can remove, there is at least one vertex $v_i$ for which the number of pairs at distance at least two is bounded by $\frac 34 r + \frac{28-r}{8}.$

    We construct an underlying graph $G$ of $H \setminus v_i$ with $V(G)=V(H \setminus v_i)$ and the edge set consisting of exactly the pairs $\{u,v\}$ where $u,v$ belong to a common hyperedge in $H \setminus v_i.$

    By a result of Solt\'es~\cite[Thm.~1]{Soltes91}, the maximum Wiener index of graphs given their order and size is obtained by the concatenation of a colex graph and a clique, and so one can compute that for a connected graph $G$ of order $7$ and size $m$,
    $$W(G) \le
    \begin{cases}
        42-m& \mbox{ if } m \in [16,21],\\
        58-2m& \mbox{ if } m \in [12,16],\\
        70-3m& \mbox{ if } m \in [9,11].
    \end{cases}$$

    When $m\ge 21-\left(\frac 34 r + \frac{28-r}{8}\right)=17.5-\frac{5r}{8},$
    we notice that $W(H \setminus v_i)=W(G) < 28+r \le W(H)$.
    For the latter, it is actually sufficient to notice this for $r=10$, in which case $m \ge 12$, and thus $34<38$.
    Hence $H$ cannot be a Solt\'es' hypergraph.    
\end{proof}

With a bit more case analysis, the case $(n,k)=(9,3)$ is excluded as well along the same lines (and a little bit of computer verifications).

\begin{thm}\label{thm:no3unif_n9}
     There is no $3$-uniform \v Solt\'es' hypergraph with order $9$.
\end{thm}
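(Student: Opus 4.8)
The plan is to mimic the proof of \cref{thm:no3unif_n8} as closely as possible, tracking which inequalities degrade when the order increases from $8$ to $9$, and to use a computer check to mop up the boundary regime where the counting bound is too weak. Suppose $H$ is a $3$-uniform \v Solt\'es' hypergraph of order $9$. By~\cite[Thm.~1]{CGSTT23+}, the tight path $P_8^3$ uniquely maximises the Wiener index among $3$-uniform hypergraphs of order $8$; let $W_8^{\max}$ denote this value. Then for every $v$, $W(H)=W(H\setminus v)\le W_8^{\max}$, which bounds $W(H)$ from above. Writing $\abs{E(H)}=\binom{9}{3}-r=84-r$, since $H\setminus v$ must be connected for every $v$ the hypergraph $H$ is quite dense, and as in the order-$8$ case we get $W(H)\ge \binom{9}{2}+r=36+r$ (every non-adjacent pair contributes at least $2$), so $r$ is bounded by $W_8^{\max}-36$.

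Next I would reuse the two structural observations verbatim. First: for every two adjacent vertices $u,v$ there is at most one $w\notin\{u,v\}$ with $d_{H\setminus w}(u,v)>1$ — this is because if $u,v$ lie in a common hyperedge $e$, then $u,v$ remain adjacent in $H\setminus w$ for every $w\notin e$, and $\abs{e\setminus\{u,v\}}=1$. Second: for every non-adjacent pair $u,v$, both survive in exactly $9-2=7$ of the $9$ hypergraphs $H\setminus v_i$. Combining these by averaging over the $9$ choices of deleted vertex, there is some $v_i$ for which the number of pairs at distance $\ge 2$ in $H\setminus v_i$ is at most $\tfrac{7}{9}r+\tfrac{1}{9}(84-r)=\tfrac{6r}{9}+\tfrac{84}{9}$; I would clean up the arithmetic (the first term counts non-adjacent pairs of $H$ that remain non-adjacent, the second counts newly non-adjacent pairs arising from a previously adjacent pair losing its unique common third vertex). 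Then I build the underlying graph $G$ on $V(H\setminus v_i)$, which has order $8$ and size $m=\binom{8}{2}-(\text{that count})\ge 28-\tfrac{6r}{9}-\tfrac{84}{9}$, and apply Solt\'es' extremal bound~\cite[Thm.~1]{Soltes91} for the maximum Wiener index of a connected graph of order $8$ and size $m$ (a piecewise-linear function of $m$, obtained from colex-graph-plus-clique). The goal is to show $W(H\setminus v_i)=W(G)<36+r\le W(H)$, giving a contradiction.

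The main obstacle is that this averaging bound is weaker at order $9$ than at order $8$: the "surviving fraction" $7/9$ is larger than $3/4$, the clean-up term is larger, and the extremal-Wiener-index function for order-$8$ graphs is flatter, so the inequality $W(G)<36+r$ will only hold for $r$ above some threshold (equivalently, for $H$ sufficiently dense). This is exactly why the statement is flagged in the excerpt as requiring "a bit more case analysis" and "a little bit of computer verification." So the plan has two parts: for $r$ larger than the threshold, the averaging argument closes as above; for the finitely many small values of $r$ (i.e.\ $H$ with $\abs{E(H)}\ge 84-r_0$ for the threshold $r_0$), I would run a direct computer search over $3$-uniform hypergraphs of order $9$ with size in this narrow high-density window, checking that none is \v Solt\'es'. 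One can prune this search aggressively: $H\setminus v$ must be connected for all $v$, $\diam(H\setminus v)$ is small (bounded by a constant, since $W(H\setminus v)\le W_8^{\max}$ forces few distant pairs), and up to isomorphism only complements with few missing hyperedges need to be enumerated; this verification would be recorded alongside the other checks at~\cite[doc.\ \text{NoSmallSH}]{C25github}.

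A secondary technical point to get right is the precise extremal bound for order-$8$ graphs — I would state it as a three- or four-piece formula in $m$ analogous to the order-$7$ formula used in \cref{thm:no3unif_n8}, derived from the colex-plus-clique description, and double-check the breakpoints so that the comparison $W(G)<36+r$ is valid on the whole range of $m$ that the averaging bound guarantees. With the extremal formula in hand, plugging in $r=r_0$ (the largest value handled by the averaging argument) and checking the single resulting numerical inequality suffices, just as the $r=10$ check sufficed in the order-$8$ proof; everything else follows by monotonicity in $r$.
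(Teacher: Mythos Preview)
Your overall strategy is exactly the paper's: bound $W(H)$ above by $W_8^{\max}=57$, bound it below by $36+r$, average over the nine vertex deletions to find a good $v_i$, pass to the underlying graph $G$ of $H\setminus v_i$, and invoke \cite[Thm.~1]{Soltes91} for order-$8$ graphs. But the proposal contains a concrete error in the definition of $r$, and this error propagates.

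In the order-$8$ proof the line ``Let $\abs{E(H)}=28-r$'' is (despite the notation) counting \emph{adjacent pairs}, not hyperedges: $28=\binom{8}{2}$, and the next sentence says ``$r$ pairs of distances are at least two''. For order $9$ the correct setup is therefore $n_1=36-r$, not $\abs{E(H)}=\binom{9}{3}-r=84-r$. Under your reading the inequality $W(H)\ge 36+r$ is simply false: removing $r$ hyperedges from $K_9^{(3)}$ does not create $r$ non-adjacent pairs, since every pair $\{u,v\}$ lies in seven hyperedges and stays adjacent until all seven are gone. Likewise, the ``at most one $w$'' observation is a statement about adjacent \emph{pairs}, so the averaging bound must read $\tfrac{7}{9}r+\tfrac{1}{9}(36-r)$, giving $m\ge 28-\bigl(\tfrac{7}{9}r+\tfrac{36-r}{9}\bigr)=24-\tfrac{2r}{3}$; your term $\tfrac{1}{9}(84-r)$ has no meaning in this argument.

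With the corrected $r$ one has $r\le 21$, and comparing the order-$8$ \v Solt\'es bound against $36+r$ shows $W(G)<36+r$ for $r\le 16$; the inequality fails for large $r$, not small $r$. Hence the residual computer check is for the \emph{sparse} regime $r\in\{17,\dots,21\}$, equivalently $W(H)\in[53,57]$, which is what the paper does (see \cite[doc.\ \text{NoSoltes(9,3)}]{C25github}). Your plan to enumerate near-complete $3$-uniform hypergraphs on $9$ vertices targets the wrong end of the range (where the counting argument already succeeds) and would be a vastly larger search besides.
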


\begin{proof}
    By~\cite[Thm.~1]{CGSTT23+}, there is a unique tight path $P_{8}^3$ which maximises the Wiener index of $3$-uniform hypergraphs of order $7$, and it has Wiener index $57$.
    Hence if $H$ is $3$-uniform \v Solt\'es' hypergraph with order $8$, $W(H)=W(H \setminus v) \le 57.$
    Let $\abs{E(H)}=36-r.$
    Then $W(H) \ge 36+r$, since $r$ pairs of distances are at least two.
    In particular, we have $r \le 21.$

    Among the $9$ choices of a vertex to be removed, there is at least one for which $H \setminus v_i$ has size at least $28-\left( \frac79 r+\frac{36-r}9\right)=24-\frac 23 r.$
    Let $G$ the underlying graph of $H \setminus v_i.$
    If $r \le 16,$ we check that $W( H \setminus v_i) \le W(G) < 36+r.$

    The remaining cases where $W( H \setminus v_i) \in [53,57]$ are easily excluded with a final computer check as well.
    For this, we check the plausible size of $H\setminus v_i$ and check the hypergraphs for which the removal of any vertex can have such a size. See~\cite[doc. \text{NoSoltes(9,3)}]{C25github}.
\end{proof}

\subsection{No $4$-uniform \v Solt\'es' hypergraph with order $9$}

We start by proving a few easy lemmas.

\begin{lem}\label{lem:sizediam3}
    Let $H$ be a connected $4$-uniform hypergraph of order $8$.
    Then its size is at least $3$ and its diameter is at most $3.$
\end{lem}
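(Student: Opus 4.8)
The plan is to prove both assertions by elementary counting on a connected $4$-uniform hypergraph $H$ on $8$ vertices. For the size bound, note that each hyperedge has $4$ vertices, so a single hyperedge covers only $4$ of the $8$ vertices; two hyperedges cover at most $8$ vertices, but to be \emph{connected} the two hyperedges would need to intersect, hence cover at most $7$ vertices, leaving at least one vertex uncovered — so $H$ cannot have exactly $2$ edges. With one edge it covers only $4$ vertices, again impossible. Hence $\abs{E(H)} \ge 3$.

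For the diameter bound, I would argue as follows. Take any two vertices $u, w$ with $d_H(u,w) \ge 2$; let $e$ be a hyperedge containing $u$ and $f$ a hyperedge containing $w$. Since $H$ is connected, there is a path from $e$ to $f$; I want to show that in fact $d_H(u,w) \le 3$, i.e.\ there is a sequence of at most three hyperedges linking $u$ to $w$. The key observation is the pigeonhole counting on the ground set of size $8$: if $u$ and $w$ were at distance $\ge 4$, then along a shortest $u$–$w$ path $e_1, e_2, e_3, e_4, \dots$ consecutive hyperedges intersect but $e_1$ and $e_3$ are disjoint (otherwise we could shortcut), and likewise $e_2$ and $e_4$ are disjoint. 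Two disjoint $4$-sets already exhaust all $8$ vertices, so $e_1 \cup e_3 = V(H)$ and $e_2 \cup e_4 = V(H)$; but then $e_2$, being a $4$-set meeting both $e_1$ and $e_3$ nontrivially and disjoint from $e_4$, forces $e_2 = e_4$'s complement $= e_1 \cup e_3 \setminus e_4$, and one checks this collapses the path, contradicting that it had length $\ge 4$. Hence $\diam(H) \le 3$.

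I expect the main obstacle to be making the diameter argument fully rigorous: the pigeonhole idea is clear, but one must carefully handle that a shortest path in a hypergraph is a path in the \emph{incidence structure} (alternating vertices and hyperedges, or equivalently a sequence of hyperedges with consecutive ones intersecting), and rule out all the ways a putative length-$4$ path could fail to shortcut. The cleanest route is probably to phrase it via the underlying graph $G$ of $H$ (as defined later in the paper): $d_H(u,w) = d_G(u,w)$, and in $G$, the neighbourhood $N_G(u)$ is the union of the hyperedges through $u$ minus $\{u\}$, so once the hyperedge structure forces two disjoint hyperedges to partition $V(H)$, every vertex is within distance $2$ of $u$ through one of those two edges. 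I would present the size bound first as a one-line remark, then spend the bulk of the argument on the diameter claim via this complement/partition dichotomy.

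One subtlety to flag: the statement is about $H$ of order exactly $8$, which is precisely the order of $H \setminus v$ when $H$ has order $9$ — this lemma is clearly a preparatory step for the $(n,k)=(9,4)$ case, so I would keep the proof self-contained and not invoke the general maximum-Wiener-index results of \cite{CGSTT23+}, since a direct pigeonhole argument is both shorter and gives exactly the bounds ($\ge 3$ and $\le 3$) needed downstream.
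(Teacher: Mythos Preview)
Your size bound is the same one-line argument the paper gives. For the diameter bound, however, the paper takes a shorter and cleaner route than your path-collapse argument. Rather than assuming $d_H(u,w)\ge 4$ and analysing a shortest path $e_1,e_2,e_3,e_4,\dots$, the paper assumes only $d_H(u,v)\ge 3$ and observes that then every hyperedge through $u$ is disjoint from every hyperedge through $v$ (otherwise $d_H(u,v)\le 2$); since two disjoint $4$-sets already fill all $8$ vertices, this forces $\deg(u)=\deg(v)=1$ with complementary edges $e_u=V\setminus e_v$. The size bound just proved then supplies a third edge, which necessarily meets both $e_u$ and $e_v$, giving $d_H(u,v)\le 3$ directly. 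This never names a path and never worries about how long it might be.

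Your approach is correct in principle, but the phrase ``one checks this collapses the path'' is where the real work hides: you also need $e_1\cap e_4=\emptyset$ (by the same shortcut reasoning you used for $e_1\cap e_3$), whence $e_4=V\setminus e_1=e_3$, so $v_2,v_4$ lie in the same hyperedge and the path shortens to length at most $3$. Your alternative suggestion via the underlying graph --- that once two disjoint hyperedges partition $V(H)$, ``every vertex is within distance $2$ of $u$ through one of those two edges'' --- is not right as stated: vertices in the $4$-set \emph{not} containing $u$ still require a bridging third edge to be reached in two steps, and supplying that bridge is precisely the paper's argument.
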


\begin{proof}
 Two hyperedges only cover $8$ vertices if they are disjoint, in which case the hypergraph is not connected. Hence the size of $H$ has to be at least $3.$

    If $u$ and $v$ are two vertices for which $d(u,v) \ge 3$, then the hyperedges containing $u$ resp. $v$ are disjoint. Since $8=2 \cdot 4,$ this implies that both are covered by a single hyperedge, and these two hyperedges are disjoint.
    Any other hyperedge needs to intersect both its edges, which implies $d(u,v) \le 3.$
    Since $u$ and $v$ were chosen arbitrarily, we conclude.
\end{proof}

We will use the following definition in the remaining of this subsection.

\begin{defi}
    For a hypergraph $H$, let $n_i$ be the number of pairs of vertices in $H$ for which the distance between them equals $i$.
\end{defi}

\begin{lem}\label{lem:sizeGE15}
    A connected $4$-uniform hypergraph $H$ of order $8$ satisfies $n_1 \ge 15$. Equality can only be attained by hypergraphs of size $3.$
\end{lem}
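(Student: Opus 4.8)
The plan is to pass to the underlying graph. Let $G$ be the graph on $V(H)$ whose edges are the pairs of vertices lying in a common hyperedge, so that $n_1=\abs{E(G)}$ and every hyperedge induces a copy of $K_4$ in $G$. Since $H$ is connected and has at least two vertices, every vertex lies in some hyperedge, so the hyperedges cover all $8$ vertices and $G$ is connected; by~\cref{lem:sizediam3} the number $m$ of hyperedges satisfies $m\ge 3$. As $H$ is connected, we may order its hyperedges $e_1,\dots,e_m$ so that $e_i$ meets $V_{i-1}:=e_1\cup\dots\cup e_{i-1}$ for each $i\ge 2$; put $s_i:=\abs{e_i\cap V_{i-1}}\in\{1,2,3,4\}$. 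Then $\abs{V_i}=\abs{V_{i-1}}+(4-s_i)$ together with $\abs{V_m}=8$ gives $\sum_{i=2}^{m}s_i=4m-8$. When $e_i$ is added to the graph $G_{i-1}$ built from $e_1,\dots,e_{i-1}$, the only pairs inside $e_i$ that can already be edges are those contained in $e_i\cap V_{i-1}$, so at least $6-\binom{s_i}{2}$ new edges appear. Hence
\[
n_1=\abs{E(G)}\ \ge\ 6+\sum_{i=2}^{m}\left(6-\binom{s_i}{2}\right)\ =\ 6m-\sum_{i=2}^{m}\binom{s_i}{2}.
\]

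It remains to bound $\sum_{i=2}^{m}\binom{s_i}{2}$ from above, given $s_i\in\{1,2,3,4\}$ and $\sum_{i=2}^{m}s_i=4m-8$. Since $x\mapsto\binom{x}{2}$ is convex, one pushes the $s_i$ to the extremes; a short optimisation shows the maximum equals $6m-15$ and is attained only by the multiset consisting of $m-3$ entries equal to $4$, one entry equal to $3$, and one entry equal to $1$. Substituting gives $n_1\ge 6m-(6m-15)=15$, the asserted inequality.

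For the equality claim, suppose $n_1=15$. Then all the inequalities above are tight: $\{s_i\}_{i\ge 2}$ is the extremal multiset just described, and for every $i$ the set $e_i\cap V_{i-1}$ is a clique of $G_{i-1}$. Suppose $m\ge 4$. Then some step has $s_j=4$, so $e_j\subseteq V_{j-1}$ and $e_j$ is a $K_4$ already present in $G_{j-1}$, i.e.\ $\binom{e_j}{2}\subseteq\bigcup_{i<j}\binom{e_i}{2}$. Deleting $e_j$ therefore does not change $E(G)$, keeps all $8$ vertices (each lies in an earlier hyperedge), and keeps the hypergraph connected (any two vertices of $e_j$ lie in a common earlier hyperedge, so walks through $e_j$ can be rerouted). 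The result is again a connected $4$-uniform hypergraph of order $8$ with $n_1=15$ but with one fewer hyperedge, so iterating we reach such a hypergraph $H^{*}$ with exactly three hyperedges and the same underlying graph $G$. By the $m=3$ instance of the tightness analysis, $\{s_2,s_3\}=\{3,1\}$, which forces two of the three hyperedges of $H^{*}$ to share three vertices; up to isomorphism this leaves exactly two possibilities for $H^{*}$, and in each of them a direct check shows that $G$ contains exactly three copies of $K_4$. This contradicts the fact that $G$ must contain the $m\ge 4$ distinct hyperedges of $H$ as $K_4$'s. Hence $m=3$.

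The inequality $n_1\ge 15$ is thus just the convexity computation above; the substance lies in the equality statement. The main obstacle there is to carry out the reduction carefully — verifying that deleting a ``redundant'' hyperedge preserves connectedness, the vertex set, and the value of $n_1$ — and then to check by hand that the two extremal three-hyperedge configurations admit no fourth $K_4$ in their underlying graph.
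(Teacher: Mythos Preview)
Your proof is correct. The inequality argument is essentially the paper's: the paper parametrises by the number of \emph{new} vertices $t_i=4-s_i$ and bounds the new adjacent pairs below by $f(t_i)=\binom{t_i}{2}+t_i(4-t_i)$, which is identical to your $6-\binom{s_i}{2}$; the convexity optimisation is then the same computation in different coordinates.

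For the equality statement your route diverges from the paper's. The paper argues directly: once one knows that equality forces two of the $t_j$ to be $\{1,3\}$ (so three edges already cover $V$) and that each $e_i\cap V_{i-1}$ is a clique in $G_{i-1}$, it picks any putative fourth edge $e_4$, lets $k$ be minimal with $e_4\subseteq V(H_k)$, takes $v\in e_4$ new at step $k$ and $w\in e_4\setminus e_k$, and checks (using the clique condition to rule out the possibility that $\{v,w\}$ lies in a later $e_j$) that $\{v,w\}$ is an adjacent pair not accounted for in the bound --- contradiction. Your argument instead iteratively strips off the redundant edges with $s_j=4$, reduces to a three-edge hypergraph $H^*$ with the same underlying graph, classifies the two isomorphism types of $H^*$, and verifies by hand that each has exactly three $K_4$'s. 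Both are valid; the paper's is shorter and avoids the case enumeration, while yours makes the extremal structure completely explicit and yields the pleasant side fact that the underlying graph in the equality case contains no ``extra'' $K_4$.
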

\begin{proof}
    Let $e_1$ be a hyperedge of $H$. Then $e_1$ already contains $6$ pairs of vertices at distance one.
    Since $H$ is connected, we can list the other hyperedges as $e_2, e_3, 
    \ldots, e_m$ such that $e_1 \cap e_2 \not= \emptyset.$
    Let $H_i$ be the hypergraph spanned by the edges $e_1, e_2, \ldots, e_i.$
    If $\abs{V(H_i)}-\abs{V(H_{i-1})}=t_i,$ then there are at least $f(t_i)$, where $f(t)=\binom{t}{2}+t(4-t)$, additional pairs of vertices at distance $1$ more in $H_i$ than in $H_{i-1}$.
    For this, note that every pair of vertices in $H_i \setminus H_{i-1}$ are at distance $1$, and also every $u \in V(H_i)\setminus V(H_{i-1})$ and $v \in V(H_{i-1}) \cap e_i$ are at distance $1$ (and these pairs were not counted when considering $H_{i-1}.$
    Since $f(1)=3,f(2)=5$ and $f(3)=6$, we easily deduce that $\sum f(t_j) \ge 9$ (and thus $n_1 \ge 6+9=15$) for nonnegative integers $t_j$ satisfying $\sum_{j=2}^m t_j=4$, and equality is only obtained by $f(1)+f(3).$ 
    Hence there are three hyperedges covering the $8$ vertices when equality occurs. Without loss of generality, these are $e_1,e_2, e_3.$
    If the size is at least four, there is also an edge $e_4.$
    Let $k$ be minimum such that $H_k$ contains the vertices of $e_4$ and $v$ be a vertex from $e_4$ in $ V(H_k)\setminus V(H_{k-1})$.
    Then as $e_4 \not= e_k,$ $e_4$ should connect $v$ with a non-adjacent vertex in $V(H_{k-1})$, and thus our lower bound $6+f(1)+f(3)$ is not sharp.    
\end{proof}

\begin{cor}\label{cor:maxWfordiam2}
    A connected $4$-uniform hypergraph $H$ of order $8$ and diameter $2$ satisfies $W(H) \le 41.$
    Equality can only be obtained by hypergraphs of size $3.$
\end{cor}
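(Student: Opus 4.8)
The plan is to combine the bound $n_1 \ge 15$ from \cref{lem:sizeGE15} with the observation that diameter~$2$ forces every non-adjacent pair to be at distance exactly $2$. Writing $n=8$, there are $\binom 82 = 28$ pairs of vertices, so if $H$ has diameter $2$ then $W(H) = n_1 + 2 n_2 = n_1 + 2(28 - n_1) = 56 - n_1$. Since \cref{lem:sizeGE15} gives $n_1 \ge 15$, we immediately obtain $W(H) \le 56 - 15 = 41$.

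For the equality case, note that $W(H) = 41$ forces $n_1 = 15$, and by the second statement of \cref{lem:sizeGE15} this can only happen when $H$ has size $3$. So the corollary follows directly. I would write it as: \emph{Since $\diam(H) \le 2$, every pair of vertices is at distance $1$ or $2$, so $W(H) = n_1 + 2(\binom 82 - n_1) = 56 - n_1 \le 41$ by \cref{lem:sizeGE15}, with equality only if $n_1 = 15$, which forces $|E(H)| = 3$.}

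There is essentially no obstacle here; the only thing to be slightly careful about is that the corollary hypothesis explicitly assumes diameter exactly $2$ (or at most $2$), which is precisely what makes $n_2 = 28 - n_1$ valid — if the diameter could be $3$ one would instead only get $W(H) = n_1 + 2n_2 + 3n_3 \ge \ldots$ in the wrong direction, so the diameter-$2$ assumption is used in an essential way to turn the lower bound on $n_1$ into an upper bound on $W(H)$. One should also confirm that such hypergraphs of diameter $2$ exist at all (otherwise the statement is vacuous), but since the corollary is only used as an upper bound in the subsequent case analysis, vacuity would not hurt; I would not belabor this point.

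In short, the proof is the one-line computation $W(H) = 56 - n_1$ together with \cref{lem:sizeGE15}. I would present it in two or three sentences, invoking \cref{lem:sizeGE15} for both the inequality $n_1 \ge 15$ and the size characterization of the equality case.
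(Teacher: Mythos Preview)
Your proposal is correct and essentially identical to the paper's own proof: both compute $W(H)=n_1+2(\binom{8}{2}-n_1)=56-n_1$ from the diameter-$2$ assumption and then invoke \cref{lem:sizeGE15} for the bound $n_1\ge 15$ and the size-$3$ characterization of equality.
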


\begin{proof}
    If $H$ has diameter $2$, then $W(H)=n_1+2n_2=n_1+2\left( \binom82 - n_1 \right) =56-n_1.$ The latter is bounded by $41$ by~\cref{lem:sizeGE15}, and the claim also says equality can only occur if the size is $3.$
\end{proof}

\begin{lem}\label{lem:Wle44}
    A connected $4$-uniform hypergraph $H$ of order $8$ and diameter $3$ satisfies $W(H) \le 44.$
    Equality is obtained if and only if $H$ has size $3.$
\end{lem}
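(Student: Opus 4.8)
The plan is to express $W(H)$ through the distance counts $n_1,n_2,n_3$ and then exploit the rigidity that a pair at distance $3$ forces on a connected $4$-uniform hypergraph of order $8$. Since $\diam(H)=3$ we have $n_1+n_2+n_3=\binom{8}{2}=28$, hence
\[
W(H)=n_1+2n_2+3n_3=56-n_1+n_3,
\]
so it suffices to prove $n_1-n_3\ge 12$ with equality exactly when $\abs{E(H)}=3$. Fix a pair $\{u,v\}$ with $d(u,v)=3$. As in the proof of~\cref{lem:sizediam3}, every hyperedge through $u$ is disjoint from every hyperedge through $v$, and since both have size $4$ with $4+4=8$, each of $u,v$ lies in a \emph{unique} hyperedge, say $A\ni u$ and $B\ni v$, and $\{A,B\}$ partitions $V(H)$. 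The same argument shows that every pair at distance $3$ has one endpoint whose only hyperedge is $A$ and one whose only hyperedge is $B$; in particular the partition $\{A,B\}$ is independent of the chosen distance-$3$ pair.

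Call a hyperedge \emph{crossing} if it meets both $A$ and $B$. Let $A_0\subseteq A$ and $B_0\subseteq B$ be the sets of vertices lying in some crossing hyperedge, put $a=\abs{A_0}$, $b=\abs{B_0}$, $A_1=A\setminus A_0$, $B_1=B\setminus B_0$, and record the following.
\begin{enumerate}[label=(\roman*)]
  \item The distance-$3$ pairs are exactly the pairs in $A_1\times B_1$, so $n_3=(4-a)(4-b)$. (A vertex incident to a distance-$3$ pair lies in a unique hyperedge by the previous paragraph, which is $A$ resp.\ $B$ according to its side, so it is in no crossing hyperedge; conversely a vertex of $A_1$ has only the hyperedge $A$, a vertex of $B_1$ only $B$, these are disjoint, and connectivity gives a length-$3$ path through one crossing hyperedge.)
  \item $n_1=12+P$, where $P$ counts the pairs $\{x,y\}$ with $x\in A_0$, $y\in B_0$ that lie in a common crossing hyperedge. (A distance-$1$ pair lies in $A$, in $B$, or in a crossing hyperedge; the first two contribute $2\binom{4}{2}=12$ pairs, a crossing hyperedge contributes only such mixed pairs.)
  \item Connectivity forces $a,b\ge 1$ (else $A$ or $B$ would be a connected component, impossible as $\abs{V(H)}=8$), so crossing hyperedges exist; and $n_3\ge 1$ with (i) forces $a,b\le 3$.
  \item For each crossing hyperedge $f$, $\abs{f\cap B}\le b$ and $\abs{f\cap A}+\abs{f\cap B}=4$, so $4-b\le\abs{f\cap A}\le a$; hence $a+b\ge 4$.
\end{enumerate}
Combining (i) and (ii) gives $W(H)=56-n_1+n_3=44-P+(4-a)(4-b)$.

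It then remains to analyse $(a,b)$, which by (iii)--(iv) belongs to $\{(1,3),(3,1),(2,2),(2,3),(3,2),(3,3)\}$. If $(a,b)\in\{(2,3),(3,2),(3,3)\}$ then $\max(a,b)=3$; since every vertex of $A_0\cup B_0$ is incident to some mixed pair counted by $P$, the bipartite graph of these pairs has minimum degree at least $1$, so $P\ge\max(a,b)=3>(4-a)(4-b)$ and $W(H)\le 43$. If $(a,b)\in\{(1,3),(3,1),(2,2)\}$ then $4-b\le\abs{f\cap A}\le a$ forces $f\cap A=A_0$ and $f\cap B=B_0$ for \emph{every} crossing hyperedge $f$; as hyperedges are distinct there is exactly one crossing hyperedge, so $\abs{E(H)}=3$, and $P=ab=(4-a)(4-b)$ yields $W(H)=44$. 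Conversely, a size-$3$ diameter-$3$ hypergraph consists of the two disjoint hyperedges $A,B$ coming from a distance-$3$ pair together with one crossing hyperedge $f$, so $(a,b)=(\abs{f\cap A},\,4-\abs{f\cap A})\in\{(1,3),(2,2),(3,1)\}$ and again $W(H)=44$. I expect the only genuinely delicate point to be setting up the decomposition (i)--(ii) cleanly, above all the claim that all distance-$3$ pairs sit inside the single block $A_1\times B_1$; after that the bound and the equality characterisation are the routine case check just described (and the values $n_1\in\{15,16\}$ attained at equality match~\cref{lem:sizeGE15}).
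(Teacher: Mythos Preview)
Your proof is correct. Both you and the paper start from the same structural observation---a distance-$3$ pair forces the vertex set to split into two disjoint hyperedges $A$ and $B$---and then compute $W(H)=56-n_1+n_3$. The paper finishes by computing $W=44$ directly for the two possible size-$3$ configurations and then invoking monotonicity (adding further edges cannot raise any distance and strictly raises $n_1$); you instead parametrise all cases at once by $(a,b,P)$ and obtain $W(H)=44-P+(4-a)(4-b)$, which lets you read off both the bound and the equality characterisation from a short case check. Your route is a bit more explicit about why equality forces exactly one crossing hyperedge (and hence size $3$), while the paper's monotonicity argument is terser but leaves the ``$n_1$ strictly increases'' step implicit; substantively the two arguments are the same.
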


\begin{proof}
    Since $\diam(H)=3,$ it has two disjoint hyperedges $e_1$ and $e_2$.
    A third hyperedge intersects them in $1$ and $3$ vertices, or both in $2$ vertices.
    In both cases, $W(H)=44$, since $4 \cdot 3+ 16\cdot 1+ 8 \cdot 2=3 \cdot 3+ 15\cdot 1+ 10 \cdot 2$.
    If there are more than $3$ hyperedges, the total distance is clearly smaller after adding further edges (since $n_1$ increases and no distance can increase).
\end{proof}


 \begin{lem}\label{lem:Wbound}
        A connected $4$-uniform hypergraph $H$ of order $8$ and size at least $4$, satisfies either 
        \begin{itemize}
            \item $\diam (H) \le 2$ and $W(H) \le 40$
            \item $\diam (H)  =3$, $W(H)\le 41$ and $H$ has exactly one pair of vertices at distance $3$ 
            \item $\diam(H)  =3$, $40\le W(H) \le 42$ and $H$ has exactly two pairs of vertices at distance $3$ 
        \end{itemize}
    \end{lem}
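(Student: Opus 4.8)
The plan is to split into the two diameter cases and, within each, to perform a more refined edge-counting than in the preceding lemmas, keeping track of how many additional adjacent pairs are created once a fourth hyperedge is added. For the case $\diam(H)\le 2$: by \cref{cor:maxWfordiam2} we have $W(H)=56-n_1$, and equality $W(H)=41$ forces size $3$. So for size at least $4$ it suffices to show $n_1\ge 16$, i.e.\ that adding a fourth hyperedge to a spanning triple strictly increases $n_1$. This is exactly the argument at the end of the proof of \cref{lem:sizeGE15}: any fourth hyperedge $e_4$ distinct from the three spanning ones introduces, via the minimality argument on $H_k$, at least one new adjacent pair, so $n_1\ge 16$ and $W(H)\le 40$.

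For the case $\diam(H)=3$: here $H$ has two disjoint hyperedges $e_1,e_2$ (as in \cref{lem:Wle44}), and their complementary position means every vertex lies in $e_1\cup e_2$; the pairs at distance $3$ are precisely certain pairs $(u,v)$ with $u\in e_1$, $v\in e_2$ that are not ``bridged'' by another hyperedge. First I would argue that with size at least $4$ there is at least one bridging hyperedge beyond $e_1,e_2$, hence at most one ``layer'' of cross-pairs can remain at distance $3$; a short combinatorial check (a hyperedge meeting $e_1$ in $a$ vertices and $e_2$ in $4-a$ vertices) shows the number of cross-pairs left at distance $3$ is small, and I would pin it down to be either $1$ or $2$. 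Then for each of these two subcases I would recompute $W(H)$ as in \cref{lem:Wle44}, i.e.\ $W(H)=n_1+2n_2+3n_3$ with $n_3\in\{1,2\}$ and $n_1+n_2+n_3=\binom 82=28$, so $W(H)=56-n_1+n_3$. The bound $n_1\ge 15$ from \cref{lem:sizeGE15} gives $W(H)\le 41+n_3-1$... more carefully: with $n_3=1$ one wants $n_1\ge 16$ to get $W(H)\le 41$, and with $n_3=2$ one wants $n_1\ge 16$ to get $W(H)\le 42$; in both cases $n_1\ge 16$ holds because size $\ge 4$ (again the fourth-edge argument of \cref{lem:sizeGE15}, noting equality $n_1=15$ needs size $3$). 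The lower bounds $W(H)\ge 40$ in the second and third bullets come from the explicit configurations: with three hyperedges realizing the stated intersection pattern one computes $W=44$, and each additional hyperedge decreases $W$ by a controlled amount while $n_3$ stays at its value until enough bridges are added, so the minimum over size-$\ge 4$ realizations with that $n_3$ is $40$ (this is the routine computation I would not grind through here, just verify on the finitely many intersection patterns).

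The main obstacle I expect is the bookkeeping in the $\diam=3$ case: one must verify that the classification of $n_3$ into $\{1,2\}$ is exhaustive for size $\ge 4$ — i.e.\ rule out $n_3\ge 3$ with a fourth hyperedge present, and rule out $n_3=0$ (which would collapse to $\diam\le 2$) — and simultaneously track the interaction between which cross-pairs remain unbridged and the value of $n_1$. I would handle this by the standard move of fixing $e_1,e_2$ disjoint, parametrizing each further hyperedge $e_i$ by $|e_i\cap e_1|$, and observing that a single hyperedge with $|e_i\cap e_1|\in\{1,2,3\}$ already bridges a $1\times3$, $2\times2$, or $3\times1$ block of cross-pairs, leaving at most the complementary block unbridged; two such hyperedges chosen to overlap correctly can leave exactly one or two pairs, and anything else leaves none or forces extra adjacencies that push $n_1$ up further. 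Combining these with $n_1\ge 16$ and $W(H)=56-n_1+n_3$ then yields all three bullets, the $W(H)\ge 40$ lower bounds following from checking the handful of extremal size-$4$ configurations directly.
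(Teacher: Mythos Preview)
Your overall skeleton matches the paper's proof: both use $W(H)=56-n_1+n_3$, invoke $n_1\ge 16$ from \cref{lem:sizeGE15} for size $\ge 4$, and reduce to classifying $n_3\in\{0,1,2\}$. Your route to $n_3\le 2$ via intersection patterns of $e_3,e_4$ with $e_1$ is workable but heavier than the paper's: the paper simply observes that a vertex at distance $3$ from another must have degree $1$, so if $x_i$ denotes the number of degree-$1$ vertices in $e_i$ then $n_3=x_1x_2$; since $e_3\ne e_4$ cover at least five vertices, $x_1+x_2\le 3$ and hence $x_1x_2\le 2$. This replaces your case analysis in one line.

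There is a genuine gap in your treatment of the lower bound $W(H)\ge 40$ in the third bullet (note: only the \emph{third} bullet carries a lower bound, not the second). Your plan --- ``each additional hyperedge decreases $W$ \ldots\ so the minimum over size-$\ge 4$ realisations is $40$, verify on the finitely many size-$4$ patterns'' --- is backwards: adding edges decreases $W$, so size-$4$ configurations give the \emph{maximum} of $W$, not the minimum, and there is no a priori finite list to check for the minimum. What you are missing is an \emph{upper} bound on $n_1$ when $n_3=2$. The paper supplies it directly from the degree-$1$ picture: $n_3=2$ forces $x_1x_2=2$, so $x_1+x_2=3$, i.e.\ exactly three degree-$1$ vertices, one in $e_1$ and two in $e_2$ (or vice versa). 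Each such vertex is non-adjacent to all four vertices of the opposite $e_i$, giving $3\cdot 4-2=10$ non-adjacent pairs (the $-2$ corrects the two cross-pairs counted twice). Hence $n_1\le 28-10=18$ and $W(H)=56+2-n_1\ge 40$. Once you adopt the degree-$1$ viewpoint this bound falls out immediately; without it your lower-bound argument does not close.
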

    \begin{proof}
        Note that vertices at distance $3$ need to belong to disjoint hyperedges (and have degree one).
        Let $e_1$ and $e_2$ be those disjoint hyperedges. Let $e_i$ have $x_i$ vertices of degree $1.$
        Since the size is at least $4$, two other hyperedges cover at least $5$ vertices, thus $x_1+x_2 \le 3$ and hence $x_1 x_2 \le 2.$
        This implies we have exactly those three cases.

        Now $W(H)=3n_3+2n_2+n_1= 3n_3+2(28-n_3-n_1)+n_1=56+n_3-n_1.$
        \Cref{lem:sizeGE15} implies that $n_1 \ge 16,$ and the bounds thus follow for $n_3 \in \{0,1,2\}.$

        Finally, the lower bound when $n_3=2$ and thus $x_1+x_2=3$ is obtained from the construction where all other $5$ vertices are at distance one from each other. In this case $n_2+n_3 \ge 3 \cdot 4-2=10$ and thus $n_1 \le 18,$ resulting into $W(H)=56+n_3-n_1\ge 56+2-18=40.$
    \end{proof}

\begin{thm}\label{thm:no4unif_n9}
     There is no $4$-uniform \v Solt\'es' hypergraph with order $9$.
\end{thm}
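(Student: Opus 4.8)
The plan is to mimic the strategy of \cref{thm:no3unif_n8} and \cref{thm:no3unif_n9}, but now exploiting the sharper structural information collected in \cref{lem:sizediam3}--\cref{lem:Wbound}. Suppose $H$ is a $4$-uniform \v Solt\'es' hypergraph of order $9$ with $W(H)=W(H\setminus v)=:W$ for all $v\in V(H)$. By \cref{cor:maxWfordiam2}, \cref{lem:Wle44} and \cref{lem:Wbound} applied to the order-$8$ hypergraphs $H\setminus v$ (each of which must be connected, since $W$ is finite), $W$ lies in a short range; combining the diameter-$2$ bound $W\le 41$, the diameter-$3$ bounds, and the fact that a size-$3$ value is only attained in degenerate configurations, we get $W\le 44$, and the cases $W\in\{42,43,44\}$ force every $H\setminus v$ to have very small size (size $3$, or the specific configurations of \cref{lem:Wle44} and the last bullet of \cref{lem:Wbound}), which one can rule out either by a direct structural argument or by the computer check referenced at \cite{C25github}. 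So the substantive range is $W$ small, say $W\le 41$.

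Next I would set up the double count. Since $H$ has order $9$ and every $H\setminus v$ has finite Wiener index, $H$ itself has diameter $2$ (if two vertices were at distance $\ge 3$ their hyperedges would be disjoint, using $9<2\cdot 4+\cdots$ — more carefully, two disjoint $4$-edges cover $8$ of the $9$ vertices, and one checks this cannot survive deletion of the remaining vertex while staying connected with the distance-$3$ pair intact; alternatively a short case check). With $\diam(H)=2$ we have $W(H)=n_1+2\binom{9}{2}-2n_1$... rather $W(H)=2\binom92-n_1=72-n_1$, so $n_1=72-W\ge 31$. Now the key local observation, parallel to the $3$-uniform case: for two adjacent vertices $u,v$, the pair $\{u,v\}$ can become non-adjacent in at most \emph{two} of the hypergraphs $H\setminus w$ (this is exactly the ``adjacent pairs can be non-adjacent in at most two hypergraphs of the form $H\setminus v$'' remark from the outline, and its proof is that if removing $w$ disconnects $u$ from $v$ at distance $1$, then every $uv$-edge contains $w$, so there are at most... two such $w$ — one argues via the structure of the $uv$-edges; equality forces a specific local structure). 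Meanwhile each pair at distance $2$ in $H$ survives in $9-2=7$ of the $H\setminus w$. Averaging over the $9$ choices of $w$, some $H\setminus v_i$ has at most $\tfrac29 n_1 + \tfrac79 n_2$ non-adjacent pairs, hence underlying-graph size $m \ge \binom{8}{2}-\big(\tfrac29 n_1+\tfrac79 n_2\big)=28-\tfrac29 n_1-\tfrac79(36-n_1)=\tfrac59 n_1+\tfrac{28-something}{\ }$; plug $n_1=72-W$ to get a lower bound on $m$ linear in $W$. Then invoke Solt\'es' theorem on the maximum Wiener index of a graph of order $8$ and size $m$ (the colex-plus-clique extremal family) to bound $W(H\setminus v_i)=W(G)$ from above, and check this upper bound is $<W$ for all admissible $W$, contradicting the \v Solt\'es' property.

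The main obstacle I anticipate is that the crude averaging used for $n=7,8$ in the $3$-uniform proofs will \emph{not} close the gap here: the bound $W\le 41$ is not much below the graph maxima, and diameter-$2$ $4$-uniform hypergraphs of order $9$ can be fairly dense, so the linear-in-$W$ estimate on $m$ may leave a narrow window (the outline warns this is ``the main case'' needing the refined observations). To handle it I would, in that window, not merely average but pick $v$ to \emph{maximise} $n_1(H\setminus v)$ (equivalently minimise the number of newly-non-adjacent pairs plus lost pairs), and use the equality characterisations: if the averaging is nearly tight then many adjacent pairs are non-adjacent in exactly two $H\setminus w$, which by the structural consequence pins down the hyperedge structure of $H$ tightly (few hyperedges, with prescribed intersection patterns), at which point a finite check — partly by hand, partly by the computer verification at \cite{C25github} — eliminates the remaining possibilities for each residual value of $W$. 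The write-up would thus be: (i) reduce to $\diam(H)=2$ and $W\le 41$ via the lemmas; (ii) establish $n_1\ge 31$; (iii) prove the ``at most two $H\setminus w$'' lemma with its equality case; (iv) the averaging/extremal-graph bound handling all but a few values of $W$; (v) the structural pin-down plus finite/computer check for the leftover values.

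\begin{proof}[Proof of \cref{thm:no4unif_n9}]
\end{proof}
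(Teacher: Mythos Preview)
Your proposal follows the $3$-uniform template (underlying graph $+$ averaging $+$ \v Solt\'es' max-$W$ bound), and you correctly anticipate that it will not close the gap. In fact the gap is wider than you suggest: for $W=36$ one has $n_1=36$, $n_2=0$, so the averaging gives only $m\ge 20$ for the best $H\setminus v_i$, but order-$8$ graphs with $m=20$ can have Wiener index up to about $40$; for $W=41$ one gets $m\ge 18$, yet $W(G)=42$ is achievable. So the averaging inequality $W(G)<W$ fails across the \emph{entire} range $36\le W\le 41$, not just at the boundary, and your step (v) would have to carry essentially all of the weight --- but ``structural pin-down plus computer check'' is not a proof plan, it is a hope.

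The paper does something genuinely different and avoids any computer check in this case. Rather than picking one good $v_i$, it sums the \v Solt\'es identity over all nine deletions to obtain
\[
2W(H)=\sum_{i=1}^{9}\sigma(v_i)=\sum_{i=1}^{9}\sum_{u,w\neq v_i}\bigl(d_{H\setminus v_i}(u,w)-d_H(u,w)\bigr),
\]
and then bounds the right-hand side not merely via the crude $x_{uv}\le 2$ you state, but via a \emph{coupled} local inequality: for each vertex $v$ one shows, by a short degree analysis (Claims on $\deg(v)\in\{2,3,\ge 4\}$), that
\[
(\sigma(v)-8)+\sum_{u\in N(v)}(2-x_{uv})\ge 2.
\]
The point is that whenever $x_{uv}$ is close to $2$ for many neighbours of $v$, the transmission $\sigma(v)$ is forced above $8$, and vice versa; summing this over all $v$ yields $\sum_{uv}x_{uv}\le 63-r$ where $W(H)=36+r$. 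Combined with a bound of $1$ or $2$ on the number of distance-$3$ pairs in each $H\setminus v_i$ (from \cref{lem:Wbound}), this gives $2(36+r)>\sum x_{uv}+9y$, a clean numerical contradiction for every $1\le r\le 7$. The boundary cases $r=0$ and $r=8$ are dispatched by short structural arguments (for $r=8$, regularity would force degree $12/5$). The idea you are missing is this transmission--detour coupling: it replaces your averaging-over-one-vertex by a global identity, and replaces your reliance on the graph extremal bound by a direct control of the total detour.
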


\begin{proof}
    Assume the contrary and let $H$ be a $4$-uniform \v Solt\'es' hypergraph with order $9$.

    Observe that $\diam(H)\le 2,$ since $\delta(H)\ge 2$ and no two vertices can have disjoint closed neighbourhoods, since $\abs{N[v]}\ge 5$ for every $v\in V(H).$

    \begin{claim}\label{clm:claimdeg2}
        If $\deg(v)=2,$ then $\sigma(v)\ge 10$. 
    \end{claim}
    \begin{claimproof}
        Since $v$ is contained in $2$ edges, it has $r \le 6$ neighbours (equality in the case where $v$ is the only common vertex of these two edges). Thus, $v$ will be at a distance of $1$ from $r$ vertices and $2$ from the remaining $8-r$. This means $\sigma(v)\ge r\times 1+(8-r)\times2=16-r \ge 10$.
    \end{claimproof}

    Let $\deg_H(uv)$ be the number of hyperedges in $E(H)$ containing both $u$ and $v$. If $v$ is fixed, we will call $\deg_H(uv)$ the multiplicity of $u$.

    The following claim immediately follows from the fact that at most one $4$-uniform hyperedge can contain four fixed vertices.

\begin{claim}\label{clm:deg(uv)gexuv-1}
    If $\deg_H(uv)\ge 2$, then there is at most one vertex $w \in V \setminus \{u,v\}$ which belongs to all hyperedges containing $u$ and $v$.
\end{claim}

    \begin{claim}\label{clm:claimdeg3}
        If $\deg(v)=3$, then either $\sigma(v)=8$ and the unique $u \in V \setminus v$ with $\deg_H(uv)= 2$ satisfies $d_H(u,v)=d_{H \setminus w}(u,v)$ for every $w \in V \setminus \{u,v\},$ or $\sigma(v)\ge 9$ and there is at least one $u \in V \setminus v$ such that $\deg_H(uv)\ge 2$.
    \end{claim}
    \begin{claimproof}
        If $v$ has eight neighbours (and thus $\sigma(v)=8$), there is exactly one vertex $u$ belonging to two of the hyperedges $e_1, e_2$ containing $v$, and $e_1 \cap e_2 =\{u,v\}$ as all vertices in $V \setminus \{u,v\}$ have exactly one common hyperedge with $v$. This implies $d_H(u,v)=d_{H \setminus w}(u,v)=1$ for every $w \in V \setminus \{u,v\}.$

        If $v$ has less than eight neighbours, then $\sigma(v)\ge 9$ and there is at least one $u \in V \setminus v$ such that $\deg_H(uv)\ge 2$ (by the pigeon hole principle, since the sum of multiplicities of $8$ vertices is $9$).     
    \end{claimproof}

    \begin{claim}\label{clm:claimdeg4}
        If $\deg(v)\ge 4,$ there are at least two neighbours $u_1,u_2 \in V \setminus v$ such that $\deg_H(u_iv)\ge 2$ for $i=1,2$.
    \end{claim}
    \begin{claimproof}
        Assume the contrary, then at most one vertex $u$ of $H \setminus v$ satisfies $\deg_H(uv)\ge 2$. Hence every hyperedge containing $v$ contains at least two vertices $w$ for which $\deg_H(vw)=1.$
        This implies there are at least $8$ vertices sharing only one hyperedge with $v$. Since $\abs{H \setminus v}=8$ and equality is impossible, we conclude.
    \end{claimproof}

Since $H$ is a \v Solt\'es' hypergraph, $H\setminus v$ is a connected $4$-uniform hypergraph of order $8$ and thus satisfies $36=\binom 92 \le W(H)=W(H\setminus v)\le44$ by~\cref{cor:maxWfordiam2} and~\cref{lem:Wle44}.

For every pair of adjacent or non-adjacent vertices $u,v,$ we count the number $x_{uv}$ of vertices $w \in V \setminus\{u,v\}$ for which $d_H(u,v)<d_{H \setminus w}(u,v).$

Observe that by definition
\begin{equation} \label{eq:wiener-index}
    W(H \setminus v_i) = W(H) - \sigma(v_i) + \sum_{\substack{u, w \in V \\ u, w \neq v_i}} \big[d_{H \setminus v_i}(u, w) - d_H(u, w)\big]
\end{equation}
and since $H$ is a \v Solt\'es' hypergraph,
\begin{equation} \label{eq:wiener-index2}
        2W(H)=\sum_{i=1}^{9}\sigma(v_i) = \sum_{i=1}^{9}\sum_{\substack{u, w \in V \\ u, w \neq v_i}} \big[d_{H \setminus v_i}(u, w) - d_H(u, w)\big]
\end{equation}

We now end the proof with a small case-analysis.
If $W(H)=44$, then by~\cref{lem:Wle44}, we know $W(H\setminus v)=44$ implies that $\abs{E(H\setminus v)}=3$ for every $v \in V(H)$ and thus $H$ is $d$-regular, where $d$ satisfies $\frac{9d}{4}-d=3$ and thus $d=\frac{12}{5},$ which is impossible.

If $37\le W(H)\le 43$, we write $W(H)=\binom{9}{2}+r$, where $r=n_2$ satisfies $1\le r\le 7$.

From claims~\ref{clm:claimdeg2},~\ref{clm:deg(uv)gexuv-1},~\ref{clm:claimdeg3} and~\ref{clm:claimdeg4}, we know that $(\sigma(v)-8)+\sum_{u \in N(v)} (2-x_{uv})\ge 2$ for every vertex $v$.
Summing over all $9$ vertices in $V(H)$, results into
$2(36+r)-9\cdot 8+2\sum_{uv \in E(H)}(2-x_{uv})\ge 2\cdot 9$ and thus $\sum_{uv \in E(H)} x_{uv}\le 63-r.$

If $y$ is an upper bound for the number of pairs of vertices at distance $3$ of each other in $H \setminus v_i$, then
$$\sum_{i=1}^{9}\sum_{\substack{u, w \in V \\ u, w \neq v_i}} \big[d_{H \setminus v_i}(u, w) - d_H(u, w)\big] \le 9y+ \sum_{uv \in E(H)} x_{uv}. $$

By~\cref{cor:maxWfordiam2},~\cref{lem:Wle44} and~\cref{lem:Wbound}, we conclude  that~\eqref{eq:wiener-index2} is not satisfied since $2(36+r) > 63-r+18=81-r$ for $4 \le r$ and $2(36+r) > 63-r+9=72-r$ when $1 \le r \le 3$.

For $r=0,$ the last inequality becomes an equality. Every vertex $v$ needs to satisfy $\diam(H \setminus v)=3$ and thus $d_{H \setminus v}(u,w)=3$ for two vertices $u,w \in V.$
Since $W(H)=36$, $\deg(v) \ge 4$ and the equality implies that $u,w$ are the only two vertices sharing multiple hyperedges with $v$ and further that at least two hyperedges contain $\{u,v,w\}$ and thus also $\deg_H(vw) \ge 2.$
Since the properties for $u,w,v$ are the same, there should be three disjoint hyperedges for which $\abs{ e \cap \{u,v,w\}}=1$, which is impossible. We conclude that no equality is possible in this case.
\end{proof}

\section{Abundancy of uniform \v Solt\'es' hypergraphs}\label{sec:abundancy}

In this section, we prove that there exist uniform \v Solt\'es' hypergraphs of any order $n \ge 10$.

For $10 \le n \le 100$, examples were discovered with multiple constructions. See e.g.~\cite[Sec. 3 \& app A]{Tiwari25thesis} for examples.

Given $n\geq 100$, we can find $s,t\in \mathbb{N}$ such that $n=\binom{s}{2}-t,\quad 0\leq t\leq (s-2)$.
Let $k=n-(t+2s+1)$, and let $H$ be the hypergraph with vertex set $V=\{0,1,2,...,n-1\}$ and edge set $E=\{e_i \mid 0 \le i \le n-1\}$,
where $e_i= \{ i, i+2s+k-1\} \cup \{i+s+1, i+s+2,\ldots, i+s+k-2 \}$ (vertices are considered modulo $n$).

By definition, $H$ is a vertex-transitive, $k$-uniform hypergraph of order $n$.

Note that $k-2 \ge \frac n2$ since $\binom s2 \ge 7s \ge 2(2s+t+1)+t+4$ ($s \ge 15$ since $n>91$).
As such, it is trivial that $\diam(H\setminus v) \le 2$ $\forall v\in V(H)$ and easily seen that $\diam(H)=1.$

We now prove the following lemma.

\begin{lem}
    There are exactly $n-1$ pairs of non-adjacent vertices in $H\setminus v.$
\end{lem}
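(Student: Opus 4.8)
The plan is to understand the adjacency structure of $H$ directly from the explicit description of the hyperedges and then track what happens when one vertex is removed. Since $H$ is vertex-transitive, I would first fix $v=0$ and count the non-adjacent pairs in $H\setminus 0$; multiplying is not needed because we want the count for a single $H\setminus v$, but vertex-transitivity lets us choose the most convenient $v$.

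First I would determine, for the full hypergraph $H$, exactly which pairs $\{a,b\}$ are non-adjacent. A pair is adjacent iff some $e_i$ contains both $a$ and $b$. Writing $e_i = \{i,\ i+2s+k-1\}\cup\{i+s+1,\dots,i+s+k-2\}$, the "long block" $\{i+s+1,\dots,i+s+k-2\}$ has $k-4$ consecutive vertices, and the two "endpoint" vertices $i$ and $i+2s+k-1$ are attached. Because $k-2\ge n/2$, any two vertices whose cyclic distance is small enough lie together in some long block, so the only candidate non-adjacent pairs are those at cyclic "distance" close to $n/2$ — more precisely, pairs $\{a,b\}$ for which no translate of $e_i$ captures both. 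I would carefully compute, using the arithmetic $n=\binom s2-t$ and $k=n-(t+2s+1)$, which differences $d\equiv b-a\pmod n$ fail to be covered; I expect this set of "bad differences" to be a single residue class or a very short interval of residues, giving $O(n)$ non-adjacent pairs in $H$ itself, and then I would pin down the exact number.

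Next I would pass to $H\setminus v$. Removing $v=0$ destroys the hyperedges $e_i$ that contain $0$; a pair $\{a,b\}$ with $a,b\neq 0$ that was adjacent in $H$ becomes non-adjacent in $H\setminus 0$ only if every $e_i$ containing both $a$ and $b$ also contains $0$. Using the explicit form of the edges, the set of $i$ with $0\in e_i$ is itself an interval of residues, and I would intersect it with the set of $i$ witnessing adjacency of $\{a,b\}$. Combining the "bad differences" in $H$ with these newly-broken pairs, and subtracting the pairs involving $0$ (which no longer exist in $H\setminus 0$), I would assemble the total. The target value $n-1$ strongly suggests the bookkeeping collapses to: $H$ has essentially $n$ non-adjacent pairs coming from one bad residue class, removing $0$ eliminates one of them (the one involving $0$) while breaking no new pairs — or some equally clean cancellation — and I would verify this identity exactly.

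The main obstacle will be the modular arithmetic around the "seam": verifying that the long consecutive block together with the two attached endpoints $i$ and $i+2s+k-1$ really does cover all differences except the intended ones, with no off-by-one slippage in the indices $s+1,\ s+k-2,\ 2s+k-1$ and no accidental extra coverage (or accidental gap) from the endpoint vertices interacting with the blocks of other translates. I would handle this by reducing everything to a statement about which $d\in\{1,\dots,n-1\}$ can be written, for some shift, as a difference of two elements of the fixed edge $e_0$, treating the block–block, block–endpoint, and endpoint–endpoint cases separately, and then carefully checking the boundary residues near $n/2$ where the block length $k-4$ versus $n$ is tight. Once that characterization is correct, the count for $H\setminus v$ and the final value $n-1$ should follow by a short, direct computation.
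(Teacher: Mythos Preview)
Your plan rests on two concrete mispredictions that would derail the argument as written.

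First, the paper states just above the lemma that $\diam(H)=1$: every pair of vertices is adjacent in $H$. Your expectation that $H$ has ``essentially $n$ non-adjacent pairs coming from one bad residue class'' is therefore wrong, and so is the proposed bookkeeping (``removing $0$ eliminates one of them \ldots\ while breaking no new pairs''). In fact \emph{all} $n-1$ non-adjacent pairs in $H\setminus 0$ are newly broken, so the whole weight of the proof is on identifying which pairs lose every witnessing edge. Two smaller errors compound this: the long block $\{i+s+1,\dots,i+s+k-2\}$ has $k-2$ elements, not $k-4$; and the set $\{i:0\in e_i\}$ is not an interval but an interval $[t+s+3,\,n-s-1]$ together with the two isolated indices $i=0$ and $i=t+2$, coming from the block and the two endpoint positions respectively.

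Second, and more structurally, the ``bad differences'' reduction is the right tool for $H$ (where shift-invariance holds) but the wrong one for $H\setminus 0$. Whether every edge through $a$ and $b$ also passes through $0$ depends on the positions of $a,b$ relative to $0$, not merely on $b-a$. The paper abandons the difference viewpoint and instead partitions the vertices by proximity to $0$: it exhibits explicit surviving edges covering all pairs except those of the form $\{j,\,n-i\}$ with $1\le i,j$ and $i+j\le s$, and then checks that among these the single surviving candidate edge $e_j$ works exactly when $i+j=t+2$. The count $\sum_{j=1}^{t+1}(s-j-1)+\sum_{j=t+2}^{s-1}(s-j)=\binom{s}{2}-t-1=n-1$ then drops out. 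Your intersection idea (``every $e_i$ containing both $a$ and $b$ also contains $0$'') is a correct characterisation, but once you correct the predictions above you will find yourself doing essentially this same region-by-region case analysis rather than a clean difference computation.
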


\begin{proof}
    Since $H$ is vertex-transitive, one can check this for $v=0.$
    
    The edge $e_{\ceilfrac{t}{2}}$ covers every vertex (and thus combination of vertices in this range) $\ceilfrac{t}{2}+s+1 \le j \le \ceilfrac{t}{2}+s+k-2=n-(\floorfrac{t}{2}+s+1).$
    The edge $e_{-s}$ and $e_{t+s+2}$ cover all vertices $1 \le j \le \ceilfrac{t}{2}+s+1 $, resp. $ n-(\ceilfrac{t}{2}+s+1) \le j \le -1 $.
    For $s+1 \le i \le \floorfrac{t}{2}+s+1,$ the edge $e_i$ covers every $ n-( \floorfrac{t}{2}+s+1) \le j\le n-s$.

    Finally, let $(n-i,j) \in [n-s,\ldots,n-1] \times [1,s]$ (here $1 \le i,j \le s$).
    If $i+j\ge s+1$, the edge $e_{n-i}$ contains both $n-i$ and $j$.

    If $i+j \le s$, and thus $n-i-j>k-2+s,$ the only edge which might contains $n-i$ and $j$ but not $0$, is $e_j.$
    This happens precisely if $i+j=t+2.$
    Hence the number of non-neighbours in $H \setminus v$ of $j$ is $s-j-1$ if $1 \le j \le t+1$ and $s-j$ if $t+2 \le j \le s.$
    We conclude that the number of pairs of vertices in $H \setminus v$ which are distance $2$ apart equals
     $$\sum\limits_{j=1}^{t+1}(s-j-1) + \sum\limits_{j=t+2}^{s-1} (s-j)
     = \binom{s}{2}-t-1=n-1.$$
     \end{proof}
    Hence 
    $$W(H \setminus v) = \binom{n-1}2+n-1=\binom n2= W(H) $$
    and thus $H$ is a \v Solt\'es' hypergraph.

    So we have found uniform \v Solt\'es' hypergraph of every order $n \ge 10.$

    \begin{remark}
        The above construction provides uniform \v Solt\'es' hypergraphs for $75\%$ of the possible uniformities as well.
        The construction can be generalised to one for every $r \ge 1$, by having hyperedges $e_i=[i-s-r..i-s-1] \cup [i..i+k-2r]\cup[i+k-2r+s+1..i+k-r+s]$ (again indices modulo $n$) for $n=\binom{s}{2}-(2r-1)t-r^2+1$ and $k=\binom{s}{2}-2rt-2s-r^2$ for $0 \le t < s-\binom{r+1}{2}$. For $4 \le k \le 10^7$, there were only $8$ uniformities missing; $\{905, 1301, 1721, 2801, 9641, 11969, 52109, 5335709\}.$ 
        Since there are many other adaptations of the construction possible, for every uniformity at least $4$, one can expect that there exist $k$-uniform \v Solt\'es' hypergraphs. 
        We decided not to work out these other cases, as it does not provide new insights in the main question by \v Soltes or other things.

        No $3$-uniform example has been found, so one subquestion of~\cite[Ques.~13]{Cambie24Hyper} remains open.
    \end{remark}

\section{An irregular $9$-uniform \v Solt\'es' hypergraph}\label{sec:irregular}

In this short subsection, we present an example addressing the fifth subquestion of~\cite[Ques.~13]{Cambie24Hyper}.

\begin{exam}
Let $H$ be the $9$-uniform hypergraph with vertex set $V=[54]=\{1,2,\ldots,54\}$
and $27$ hyperedges given by
$\{ \{a,a+1,a+2,a+3,a+4,a+5,a+7,a+16,a+18\} \mid a \in [54], a \equiv 0 \pmod 2\}.$
Here elements have to be interpreted modulo $54.$
Then $H$ is an irregular uniform \v Solt\'es' hypergraph.
\end{exam}

\begin{proof}

The irregularity is easy to check, since $54 \nmid 27 \times 9$ and more precisely one can observe that there are $5$ even and $4$ odd elements in every edge.
Due to the symmetry of the construction, the hypergraph $H$ has two vertex orbits, corresponding to the parity classes, and these vertices have degree $4$ (for odd vertices) and $5$ (the even vertices).

Now one can check that $W(H)=2349=W(H \setminus 1)=W(H \setminus 2)$ ($1$ and $2$ being representatives of the odd and even vertices).

This is also verified in~\cite[\texttt{UnifSoltesHypergraphs/Verification\_IrregularUSH}]{C25github}.
\end{proof}


\begin{thebibliography}{1}

\bibitem{Cambie25digraph}
S.~{Cambie}.
\newblock {Abundancy of $z$-\v Solt{\'e}s' digraphs}.
\newblock {\em arXiv e-prints}, page arXiv:2501.00102, Dec. 2024.

\bibitem{Cambie24}
S.~Cambie.
\newblock {Towards the essence of \v Solt\'es' problem}.
\newblock {\em arXiv e-prints}, page arXiv:2406.03451, June 2024.

\bibitem{Cambie24Hyper}
S.~{Cambie}.
\newblock {{\v{S}}olt{\'e}s' hypergraphs}.
\newblock {\em arXiv e-prints}, page arXiv:2406.01504, June 2024.

\bibitem{C25github}
S.~Cambie.
\newblock Code and data for the paper ``uniform \v solt\'es' hypergraphs and \v
  solt\'es' weighted graphs''.
\newblock
  https://github.com/StijnCambie/Soltes/tree/main/UnifSoltesHypergraphs, 2025.

\bibitem{CGSTT23+}
S.~{Cambie}, E.~{Gy{\H{o}}ri}, N.~{Salia}, C.~{Tompkins}, and J.~{Tuite}.
\newblock {The maximum Wiener index of a uniform hypergraph}.
\newblock {\em arXiv e-prints}, page arXiv:2302.08686, Feb. 2023.

\bibitem{KST23}
M.~Knor, R.~{\v{S}}krekovski, and A.~Tepeh.
\newblock Selected topics on {Wiener} index.
\newblock {\em Ars Math. Contemp.}, 24(4):31, 2024.
\newblock Id/No 7.

\bibitem{Soltes91}
L.~{\v{S}}olt{\'e}s.
\newblock Transmission in graphs: a bound and vertex removing.
\newblock {\em Mathematica Slovaca}, 41(1):11--16, 1991.

\bibitem{Spiro22}
S.~Spiro.
\newblock The {Wiener} index of signed graphs.
\newblock {\em Appl. Math. Comput.}, 416:10, 2022.
\newblock Id/No 126755.

\bibitem{Tiwari25thesis}
A.~Tiwari.
\newblock {Uniform \v Solt\'es' Hypergraphs}.
\newblock {\em Master thesis}, 2025.

\end{thebibliography}

\end{document}